\renewcommand{\mod}{\operatorname{mod}}
\newcommand{\Hom}{\operatorname{Hom}}
\newcommand{\End}{\operatorname{End}}
\newcommand{\Ext}{\operatorname{Ext}}
\newcommand{\add}{\operatorname{add}}
\newcommand{\ann}{\operatorname{ann}}
\newcommand{\ind}{\operatorname{ind}}
\newcommand{\id}{\operatorname{id}}
\newcommand{\pd}{\operatorname{pd}}
\newcommand{\soc}{\operatorname{soc}}
\newcommand{\rad}{\operatorname{rad}}
\newcommand{\Tr}{\operatorname{Tr}}
\newcommand{\T}{\operatorname{T}}
\newcommand{\umod}{\operatorname{\underline{mod}}}
\newcommand{\op}{\operatorname{op}}
\newcommand{\Coker}{\operatorname{Coker}}
\newcommand{\res}{\operatorname{res}}
\newcommand{\rk}{\operatorname{rk}}
\newcommand{\cC}{\mathcal{C}}
\newcommand{\cD}{\mathcal{D}}
\newtheorem{theorem}{Theorem}[section] 
\newtheorem{lemma}[theorem]{Lemma}   
\newtheorem{proposition}[theorem]{Proposition}
\theoremstyle{definition}
\newtheorem*{example}{Example}
\title[Self-injective algebras with hereditary stable slice]%
 {Self-injective algebras with hereditary stable slice}
\author[A. Skowro\'nsk]{Andrzej Skowro\'nski}
\address[Andrzej Skowro\'nski]{%
   Faculty of Mathematics and Computer Science\\
   Nicolaus Copernicus University\\
   Chopina~12/18\\
   87-100 Toru\'n\\
   Poland}
\email{skowron@mat.uni.torun.pl}
\author[K. Yamagata]{Kunio Yamagata}
\address[Kunio Yamagata]{Department of Mathematics\\
    Tokyo University of Agriculture and Technology\\
    Nakacho 2-24-16, Koganei\\
    Tokyo 184-8588\\
   Japan}
\email{yamagata@cc.tuat.ac.jp}
\subjclass[2010]{Primary 16D50, 16G10, 16G70; Secondary 16E40, 18G20}
\begin{document}
\maketitle

\begin{center}
\vspace*{-5mm}
\textit{Dedicated to Karin Erdmann on the occasion of her seventieth birthday}
\vspace*{5mm}
\end{center}

\begin{abstract}
We determine the structure of all finite-dimensional self-injective algebras
over a field whose Auslander-Reiten quiver admits a hereditary stable
slice.
\end{abstract}

\section{Introduction and the main result} 
\label{sec:intro}

\noindent

In this paper, by an algebra we mean a basic, indecomposable,
finite-dimensional associative $K$-algebra with identity over
a field $K$.
For an algebra $A$, we denote 
by $\mod A$ the category of finite-dimensional right $A$-modules, 
by $\ind A$ the full subcategory of $\mod A$ 
formed by the indecomposable modules,
by $D$ the standard duality $\Hom_K(-,K)$ on $\mod A$, 
by $\Gamma_A$ the Auslander-Reiten quiver of $A$,
and by $\tau_A$ and $\tau_A^{-1}$ the Auslander-Reiten 
translations $D \Tr$ and $\Tr D$, respectively.
An algebra $A$ is called \emph{self-injective}
if $A_A$ is injective, or equivalently,
the projective modules in $\mod A$ are injective.
If $A$ is a self-injective algebra, then the left socle of $A$
and the right socle of $A$ coincide, and we denote them
by $\soc (A)$.
Two self-injective algebras $A$ and $A'$ are said
to be \emph{socle equivalent} if the quotient algebras
$A/\soc (A)$ and $A' / \soc (A')$ are isomorphic.

In the representation theory of self-injective algebras a prominent role
is played by the self-injective algebras $A$ which admit Galois
coverings of the form $\widehat{B} \to \widehat{B}/G=A$,
where $\widehat{B}$ is the repetitive category of an algebra $B$
of finite global dimension and $G$ is an admissible group
of automorphisms of $\widehat{B}$.
Namely, frequently interesting self-injective algebras
are socle equivalent to such orbit algebras $\widehat{B}/G$
and we may reduce their representation theory to that
for the corresponding algebras of finite global dimension
occurring in $\widehat{B}$.
For example, for $K$ algebraically closed, this is the case for
self-injective algebras of polynomial growth (see \cite{S3,S4}),
the restricted enveloping algebras 
\cite{FS1}, 
or more generally
the 
tame Hopf algebras 
with
infinitesimal group schemes \cite{FS2},
in odd characteristic, as well as for the special biserial algebras
\cite{DS,PS}.
We also mention that for algebras $B$ of finite global dimension
the stable module category $\umod \widehat{B}$
is equivalent (as a triangulated category)
to the derived category $D^b(\mod B)$ of bounded
complexes in $\mod B$ \cite{Ha}.

Among the algebras of finite global dimension a prominent role
is played by the tilted algebras of hereditary algebras,
for which the representation theory is rather well understood
(see \cite{ANS,AS,Bo,HR,JMS2,K1,K2,L,R1,R2,S1} and \cite{SY11} 
for some basic results and characterizations).
This made it possible to understand the representation theory
of the orbit algebras $\widehat{B}/G$ of tilted algebras $B$, called
\emph{self-injective algebras of tilted type} 
(we refer to
\cite{ANS,BS1,BS2,EKS,ES,Ho,HW,JMS1,JPS,KS,S3,S4,SY3,SY4,SY8,SY10}
for some general results and applications).
In particular, it was shown that every admissible
group $G$ of the repetitive category $\widehat{B}$
of a tilted algebra $B$ 
is an infinite cyclic group generated by a strictly
positive automorphism of $\widehat{B}$.
In the series of articles \cite{SY1,SY2,SY3,SY5,SY6,SY7} 
we developed the theory of self-injective algebras with
deforming ideals and established necessary and sufficient
conditions for a self-injective algebra $A$ to be socle equivalent
to an orbit algebra $\widehat{B}/G$, for an algebra $B$
and an infinite cyclic group $G$ generated by a strictly
positive automorphism of $\widehat{B}$ being the composition
$\varphi \nu_{\widehat{B}}$ of the Nakayama automorphism
$\nu_{\widehat{B}}$ of $\widehat{B}$
and a positive automorphism $\varphi$ of $\widehat{B}$.

In this paper we concentrate on the question of when
a self-injective algebra $A$, and its module category $\mod A$,
can be recovered from a finite collection of modules in $\ind A$
satisfying some homological conditions.
We will show that it is possible when these indecomposable modules 
form a hereditary stable slice in the Auslander-Reiten quiver
$\Gamma_A$ of $A$.

We shall describe the main result of the paper.

Let $A$ be a self-injective algebra and $\Gamma_A^s$
the stable Auslander-Reiten quiver of $A$, obtained from
$\Gamma_A$  by removing the projective modules and the arrows 
attached to them. 
Following \cite{SY10}, 
a full valued subquiver $\Delta$ of $\Gamma_A$ is said to be a
\emph{stable slice} if the following conditions are satisfied:
\begin{enumerate}
 \item
  $\Delta$ is connected, acyclic, and without projective modules.
 \item
  For any valued arrow $V \xrightarrow{(a,a')} U$ in $\Gamma_A$
  with $U$ in $\Delta$ and $V$ non-projective,
  $V$ belongs to $\Delta$ or to $\tau_A\Delta$.
 \item
  For any valued arrow $U \xrightarrow{(b,b')} V$ in $\Gamma_A$
  with $U$ in $\Delta$ and $V$ non-projective,
  $V$ belongs to $\Delta$ or to $\tau_A^{-1}\Delta$.
\end{enumerate}
Assume now that $\Delta$ is a finite stable slice of $\Gamma_A$.
Then $\Delta$ is said to be \emph{right regular} 
if $\Delta$ does not contain the radical $\rad P$ 
of an indecomposable projective module $P$ in $\mod A$.
More generally, $\Delta$ is said to be \emph{almost right regular} 
if for any indecomposable projective module $P$ from $\mod A$
with $\rad P$ lying on $\Delta$, $\rad P$ is a sink of $\Delta$.
Finally, $\Delta$ is said to be \emph{hereditary} 
if the endomorphism algebra $H(\Delta) = \End_A(M(\Delta))$
of the direct sum $M(\Delta)$ of all modules lying on $\Delta$
is a hereditary algebra and its valued quiver $Q_{H(\Delta)}$
is the opposite quiver $\Delta^{\op}$ of $\Delta$.

The following theorem is the main result of this paper and
extends results established in \cite{SY3,SY10}
to a general case.

\begin{theorem}
\label{th:main}
Let $A$ be a self-injective algebra over a field $K$.
The following statements are equivalent.
\begin{enumerate}[\upshape (i)]
 \item
  $\Gamma_A$ admits a hereditary almost right regular stable slice.
 \item
  $A$ is socle equivalent to the orbit algebra 
  $\widehat{B}/(\varphi\nu_{\widehat{B}})$,
  where $B$ is a tilted algebra and 
  $\varphi$ is a positive automorphism of $\widehat{B}$.
\end{enumerate}
Moreover, if $K$ is algebraically closed, 
we may replace in {\upshape{(ii)}} ``socle equivalent'' by ``isomorphic''. 
\end{theorem}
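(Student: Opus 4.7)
The plan is to prove the equivalence (i)$\Leftrightarrow$(ii) in two directions and then upgrade ``socle equivalent'' to ``isomorphic'' over algebraically closed $K$ by appealing to the deformation theory of the authors' series. For the direction (ii)$\Rightarrow$(i), I would start from the description of $\Gamma_{\widehat{B}}$ coming from Happel's equivalence $\umod\widehat{B}\simeq D^b(\mod B)$ (cited in the introduction) and the canonical slice in $D^b(\mod B)$ furnished by the underlying hereditary algebra $H$, where $B=\End_H(T)$ for some tilting $H$-module $T$. Pushing this slice down along the Galois covering $\widehat{B}\to \widehat{B}/(\varphi\nu_{\widehat{B}})$ produces a finite stable slice whose endomorphism algebra is isomorphic to $H$, hence hereditary with the prescribed valued quiver. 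Almost right regularity is then read off directly from how the positive automorphism $\varphi\nu_{\widehat{B}}$ identifies certain modules in the fundamental domain with radicals of indecomposable projectives of the orbit algebra. Since the stable Auslander-Reiten quiver is invariant under socle equivalence, the same slice appears in $\Gamma_A$.

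For (i)$\Rightarrow$(ii), let $\Delta$ be a hereditary almost right regular stable slice of $\Gamma_A$. The first step is to reduce to the right regular case: for each vertex of the form $\rad P$ in $\Delta$, which by hypothesis is a sink, I would replace it by $\tau_A^{-1}(\rad P)$ to form a new finite quiver $\Delta'$. Verifying that $\Delta'$ is again a hereditary stable slice requires (a) checking conditions (1)--(3) of the definition, where the sink hypothesis confines the modification to the boundary of $\Delta$; (b) invoking the vanishing of the relevant $\Ext^1$-groups, which follows from hereditariness of $H(\Delta)$, to see that $H(\Delta')$ is hereditary with valued quiver $(\Delta')^{\op}$; and (c) observing that the newly introduced vertices $\tau_A^{-1}(\rad P)$ are non-projective and not themselves radicals of further projectives, so that $\Delta'$ is actually right regular. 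With $\Delta'$ in hand, the main theorem of \cite{SY10} applies and yields a tilted algebra $B$ and a positive automorphism $\varphi$ of $\widehat{B}$ such that $A$ is socle equivalent to $\widehat{B}/(\varphi\nu_{\widehat{B}})$. The final clause then follows from the deforming-ideal framework of \cite{SY1,SY2,SY5,SY6,SY7}, where it is shown that over algebraically closed $K$ such socle equivalences lift to honest isomorphisms.

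The main obstacle, and the genuinely new content beyond \cite{SY10}, is the reduction step in (i)$\Rightarrow$(ii). The replacement $\rad P \mapsto \tau_A^{-1}(\rad P)$ must simultaneously respect all three conditions in the definition of a stable slice, preserve the hereditary structure of the endomorphism algebra, and remain compatible with the projective-injective modules of the self-injective algebra $A$. Controlling these interactions requires a careful local analysis of the Auslander-Reiten sequences adjacent to each sink $\rad P$, together with bookkeeping for the Nakayama permutation of $A$ which governs how $\soc(A)$ interlaces with $\Delta$. I expect this local-to-global check, rather than the application of the earlier results or the deformation-theoretic upgrade, to constitute the technical core of the proof.
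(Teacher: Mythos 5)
Your reduction step in the direction (i)$\Rightarrow$(ii) is where the proposal fails, and it fails for a structural reason rather than a technical one. Consider the case of a self-injective Nakayama algebra $A$ of Loewy length $\ell \geq 2$. There the modules of Loewy length $\ell-1$ form one or more complete $\tau_A$-orbits, and \emph{every} module in those orbits is the radical of an indecomposable projective (and equally a quotient $Q/\soc Q$). Since a stable slice must meet every $\tau_A$-orbit, no right regular stable slice exists for such $A$. Thus the almost-right-regular hypothesis cannot be traded for right regularity, and the theorem genuinely extends \cite{SY10} rather than reducing to it. Concretely, your replacement $\rad P \mapsto \tau_A^{-1}(\rad P)=P/\soc P$ breaks down twice: first, $P/\soc P$ lies in the same $\tau_A$-orbit and is itself $\rad Q$ for some projective $Q$ (contrary to your claim in (c)); second, for $X\to\rad P$ in $\Delta$ the Auslander-Reiten structure gives arrows $X\to\rad P\to\tau_A^{-1}X\to\tau_A^{-1}(\rad P)$ but none from $X$ to $\tau_A^{-1}(\rad P)$, so the modified quiver $\Delta'$ is disconnected. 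The paper avoids all of this by working directly with the almost right regular slice $\Delta$: it sets $M=M(\Delta)$, $I=r_A(M)$, $B=A/I$, proves that $M$ is a tilting $B$-module with $B$ tilted (Lemmas~\ref{lem:3.1}--\ref{lem:3.4} and Proposition~\ref{prop:3.5}), and then establishes the trace-ideal identities $Ie=J$, $eI=J'$ (Lemmas~\ref{lem:3.6}--\ref{lem:3.9}, Proposition~\ref{prop:3.10}) needed to invoke Theorem~\ref{th:2.1}. Lemma~\ref{lem:3.1}(i) in particular shows how the sink hypothesis is exploited (such $\rad P$ become injective $B$-modules) without removing them from $\Delta$.

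Your (ii)$\Rightarrow$(i) sketch also has a gap: the push-down functor $F_\lambda\colon\mod\widehat{B}\to\mod A$ is dense (and $\Gamma_A\cong\Gamma_{\widehat{B}}/G$) only when $\widehat{B}$ is locally representation-finite, i.e., when $B$ is tilted of Dynkin type. For $A$ of infinite representation type the covering-theoretic argument does not directly produce the slice; the paper instead uses the existence of a generalized standard, right stable, acyclic component (Proposition~\ref{prop:4.2}), and handles the Nakayama case by a separate explicit construction (Proposition~\ref{prop:4.3}). Your final appeal to the socle-deformation literature for the algebraically closed upgrade is in the right spirit but should be anchored to the precise hypothesis ($r_A(I)=eI$ and acyclicity of $Q_B$) under which \cite[Theorems~3.2, 4.1]{SY1} apply, as in Theorem~\ref{th:2.1}.
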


We would like to stress that in general we cannot replace
in {\upshape{(ii)}} ``socle equivalent'' by ``isomorphic''. 
Namely, there exist fields $K$, with non-zero second
Hochschild cohomology group $H^2(K,K)$, and non-splittable
Hochschild extensions
\[
  0 \to D(H) \to \widetilde{H} \to H \to 0
\]
of hereditary algebras $H$ over $K$ such that $\widetilde{H}$
is a self-injective algebra socle equivalent but
non-isomorphic to the trivial extension algebra
$\T(H) = H \ltimes D(H) = \widehat{H} / (\nu_{\widehat{H}})$
(see \cite[Corollary~4.2 and Proposition~6.1]{SY1}).

We also mention that the class of self-injective
algebras occurring in the statement (ii)
is closed under stable, and hence derived,
equivalences (see \cite{PX,SY2,SY7} and \cite{Ric}).

\section{Orbit algebras of repetitive categories}
\label{sec:orbitAlgebras}

Let $B$ be an algebra and $1_{B}=e_{1}+\cdots +e_{n}$ 
a decomposition of the identity of $B$ into a sum 
of pairwise orthogonal primitive idempotents. 
We associate to $B$ a self-injective locally bounded 
$K$-category $\widehat B$, called the 
\emph{repetitive category} of $B$ (see~\cite{HW}). 
The objects of $\widehat B$ are $e_{m,i}$, $m\in{\mathbb{Z}}$,
$i\in \{1, \dots, n\}$, and the morphism spaces are defined as follows
\[
\widehat B(e_{m,i},e_{r,j})=\left\{\begin{array}{ll}
e_{j}Be_{i},    & r=m,\\
D(e_{i}Be_{j}),& r=m+1,\\
0,& \textrm{otherwise}.
\end{array} \right.
\]
Observe that 
$e_{j}Be_{i}=\Hom _{B}(e_{i}B,e_{j}B)$, $D(e_{i}Be_{j})=e_{j}D(B)e_{i}$ 
and
\[
  \bigoplus_{(m,i)\in{\mathbb{Z}\times \{1, \dots ,n\}}} \widehat B(e_{m,i},e_{r,j})
    =e_{j}B \oplus D(Be_{j}),
\]
for any $r\in{\mathbb{Z}}$ and $j\in\{1, \dots ,n\}$.
We denote by $\nu_{\widehat B}$ the \emph{Nakayama automorphism} 
of $\widehat B$ defined by
\[
\nu_{\widehat B}(e_{m,i})=e_{m+1,i} \quad \textrm{for all} \quad (m,i)\in \mathbb{Z}\times\{1, \dots ,n\}.
\]
An automorphism $\varphi$ of the $K$-category $\widehat B$ is said to be:
\begin{itemize}%
\renewcommand{\labelitemi}{$\bullet$}
 \item \emph{positive} if, for each pair $(m,i)\in{\mathbb{Z}\times \{1, \dots ,n\}}$, 
 we have $\varphi(e_{m,i})=e_{p,j}$ for some $p\geq m$ and some $j\in\{1, \dots ,n\}$;
 \item \emph{rigid} if, for each pair $(m,i)\in{\mathbb{Z}\times \{1, \dots ,n\}}$, 
 there exists $j\in\{1, \dots ,n\}$ such that $\varphi(e_{m,i})=e_{m,j}$;
 \item \emph{strictly positive} if it is positive but not rigid.
\end{itemize}
Then the automorphisms $\nu^{r}_{\widehat B}$, $r\geq 1$, 
are strictly positive automorphisms of $\widehat B$.

A group $G$ of automorphisms  of $\widehat B$ is said to be 
\emph{admissible} if $G$ acts freely on the set of objects of 
$\widehat B$ and has finitely many orbits.
Then, following P.~Gabriel \cite{G}, we may consider 
the orbit category $\widehat B/G$ of $\widehat B$ with respect 
to $G$ whose objects are the $G$-orbits of objects in $\widehat B$,
and the morphism spaces are given by
\[
\big(\widehat B/G\big)(a,b)=
\Big\lbrace (f_{y,x})\in{\prod_{(x,y)\in{a\times b}}} 
\widehat B(x,y)\hspace{2mm}|\hspace{2mm} gf_{y,x}
=f_{gy,gx}, \forall_{g\in{G}, (x,y)\in{a\times b}}\Big\rbrace
\]
for all objects $a,b$ of $\widehat B/G$. Since $\widehat B/G$ has
finitely many objects and the morphism spaces in $\widehat B/G$
are finite-dimensional, we have the associated finite-dimensional 
self-injective $K$-algebra $\bigoplus (\widehat B/G)$ which is the
direct sum of all morphism spaces in $\widehat B/G$, called the
\emph{orbit algebra} of $\widehat B$ with respect to $G$. We will
identify $\widehat B/G$ with $\bigoplus (\widehat B/G)$. For
example, for each positive integer $r$, the infinite cyclic group
$(\nu^{r}_{\widehat B})$ generated by the $r$-th power
$\nu^{r}_{\widehat B}$ of $\nu_{\widehat B}$ is an admissible
group of automorphisms of $\widehat B$, and we have the associated
self-injective orbit algebra
\[
T(B)^{(r)}=\widehat B/(\nu^{r}_{\widehat
B})=
  \begin{Bmatrix} 
   \begin{bmatrix}
        b_{1} & 0 & 0 &\ldots & 0 & 0 & 0 \\
        f_{2} & b_{2} & 0 &\ldots & 0 & 0 & 0 \\
        0 & f_{3} & b_{3} &\ldots & 0 & 0 & 0 \\
        \vdots &\vdots &\ddots &\ddots &\vdots &\vdots &\vdots \\
        \vdots &\vdots &\vdots &\ddots &\ddots &\vdots &\vdots \\
        0 & 0 & 0 &\ldots & f_{r-1} & b_{r-1} & 0 \\
        0 & 0 & 0 &\ldots & 0 & f_{1} & b_{1}
   \end{bmatrix}\\
    b_{1}, \ldots ,b_{r-1}\in{B}, f_{1}, \ldots ,f_{r-1}\in{D(B)}
  \end{Bmatrix} ,
\]
called the $r$-\emph{fold trivial extension algebra of B}. 
In particular, $T(B)^{(1)}\cong T(B)=B\ltimes D(B)$ is 
the \emph{trivial extension algebra} of $B$ by the injective cogenerator $D(B)$.

Let $A$ be a self-injective algebra. For a subset $X$ of $A$, 
we may consider the left annihilator 
$l_{A}(X)=\{a\in A\hspace{2mm}|\hspace{2mm}aX=0\}$ 
of $X$ in $A$ and the right annihilator
$r_{A}(X)=\{a\in A\hspace{2mm}|\hspace{2mm}Xa=0\}$ 
of $X$ in $A$. 
Then by a theorem due to T.~Nakayama 
(see~\cite[Theorem~IV.6.10]{SY9}) 
the annihilator operation $l_{A}$ induces a Galois correspondence 
from the lattice of right ideals of $A$ to the lattice of left ideals of $A$,
and $r_{A}$ is the inverse Galois correspondence to $l_{A}$. 
Let $I$ be an ideal of $A$, $B=A/I$, and $e$ an
idempotent of $A$ such that $e+I$ is the identity of $B$. 
We may assume that $1_{A}=e_{1}+\cdots +e_{r}$ with $e_{1},\ldots ,e_{r}$ 
pairwise orthogonal primitive idempotents of $A$, 
$e=e_{1}+\cdots +e_{n}$ for some $n\leq r$, and 
$\{e_{i}\hspace{2mm}|\hspace{2mm}1\leq i\leq n\}$ 
is the set of all idempotents in 
$\{e_{i}\hspace{2mm}|\hspace{2mm}1\leq i\leq r\}$ 
which are not in $I$. 
Then such an idempotent $e$ is uniquely determined by $I$ 
up to an inner automorphism of $A$, and is called a \emph{residual identity} of $B=A/I$. 
Observe also that $B\cong eAe/eIe$.

Let $A$ be a self-injective algebra,
$I$ an ideal of $A$,
$B = A/I$, 
$e$ a residual identity of $B$ and
assume that $r_A(I) = eI$. 
Then we have a canonical isomorphism of algebras 
$eAe/eIe\to A/I$ and $I$ can be considered as an 
$(eAe/eIe)$-$(eAe/eIe)$-bimodule. 
Following \cite{SY1},
we denote by $A[I]$ the direct sum of $K$-vector spaces 
$(eAe/eIe)\oplus I$ with the multiplication
\[
(b,x)\cdot (c,y)=(bc,by+xc+xy)
\]
for $b,c\in{eAe/eIe}$ and $x,y\in I$. 
Then $A[I]$ is a $K$-algebra with the identity $(e+eIe,1_{A}-e)$, 
and, by identifying $x\in{I}$ with $(0,x)\in{A[I]}$,
we may consider $I$ 
to be the
ideal of $A[I]$. 
Observe that $e=(e+eIe,0)$ is a residual identity 
of $A[I]/I=eAe/eIe\cong A/I$.
We also note that $\soc(A) \subseteq I$ and
$l_{e A e}(I) = e I e = r_{e A e}(I)$,
by \cite[Proposition~2.3]{SY1}.

The following theorem is a consequence of results
established in \cite{SY1} and \cite{SY3}.

\begin{theorem}
\label{th:2.1}
Let $A$ be a self-injective algebra, $I$ ideal of $A$,
$B = A/I$, $e$ a residual identity of $B$. 
Assume that $r_A(I) = eI$
and the valued quiver $Q_B$ of $B$ is acyclic.
Then the following statements hold.
\begin{enumerate}[\upshape (i)]
\item
    $A[I]$ is a self-injective algebra with the same 
    Nakayama permutation as $A$.
\item 
    $A$ and $A[I]$ are socle equivalent.
\item
    $A[I]$ is isomorphic to the orbit algebra
    $\widehat{B}/(\varphi \nu_{\widehat{B}})$, 
    for some positive automorphism $\varphi$ of
    $\widehat{B}$.
\end{enumerate}
Moreover, if $K$ is an algebraically closed field, 
we may replace in {\upshape{(ii)}} ``socle equivalent'' by ``isomorphic''. 
\end{theorem}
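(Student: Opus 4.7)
The plan is to assemble the result from the machinery of \emph{deforming ideals} developed in \cite{SY1} together with the orbit-algebra identification of \cite{SY3}. The hypothesis $r_A(I) = eI$ is precisely the condition making $I$ a deforming ideal of $A$ in the sense of \cite{SY1}, and the algebra $A[I]$ defined above is the associated deformation of $A$ along $I$. Thus the proof should essentially consist of verifying that the hypotheses of the relevant cited theorems are met and transcribing their conclusions.

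For (i) and (ii), I would invoke the self-injectivity and socle-equivalence results of \cite{SY1}. The first step is to check that $(e+eIe,\, 1_A-e)$ is indeed the identity of $A[I]$ and that $I$, included as $\{0\}\oplus I$, becomes an ideal of $A[I]$ whose $(eAe/eIe)$-bimodule structure agrees with the original $B$-bimodule structure via the canonical isomorphism $eAe/eIe\cong A/I$. Combined with the already-quoted equalities $l_{eAe}(I)=eIe=r_{eAe}(I)$ from \cite[Proposition~2.3]{SY1} and Nakayama's duality on the annihilator lattice of $A$, this yields self-injectivity of $A[I]$ with the same Nakayama permutation as $A$. Socle equivalence then follows because $\soc(A)\subseteq I$ and $\soc(A[I])\subseteq I$ coincide as subspaces of $I$, while the quotient is the same extension of $B$ by $I/\soc(A)$ in both algebras, without using the deformed multiplication.

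For (iii), I would apply \cite{SY3}. The critical step is to identify $I$, viewed as a $B$-$B$-bimodule, as a twist ${}_1 D(B)_{\sigma}$ of $D(B)$ by some algebra automorphism $\sigma$ of $B$. The acyclicity of $Q_B$ is essential here: it forces every algebra automorphism of $B$ to be, up to an inner automorphism, a permutation of the chosen primitive idempotents, and such a permutation lifts canonically to a positive automorphism $\varphi$ of the repetitive category $\widehat B$. Comparing the matrix description of $\widehat B/(\varphi\nu_{\widehat B})$ with the decomposition $A[I] = (eAe/eIe)\oplus I$ and matching the multiplication $(b,x)(c,y) = (bc,\,by+xc+xy)$ against the multiplication in the orbit algebra then produces the required isomorphism of $K$-algebras.

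The main technical obstacle is the construction of $\varphi$ in (iii): one must disentangle the two $B$-actions on $I$, produce an algebra automorphism intertwining them with the $D(B)$-bimodule structure, and verify positivity of the resulting lift to $\widehat B$. The moreover clause for algebraically closed $K$ is a standard refinement: over such a field the Hochschild obstruction governing the deformation from $A$ to $A[I]$ vanishes (or can be normalised away), so the socle equivalence of (ii) can be upgraded to an isomorphism, matching the algebraically-closed conclusions of both \cite{SY1} and \cite{SY3}.
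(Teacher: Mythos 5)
Your proposal takes essentially the same route as the paper: the paper's proof is a bare citation, invoking \cite[Theorems 3.2 and 4.1]{SY1} for statements (i), (ii), and the final (algebraically closed) part, and \cite[Theorem~4.1]{SY3} for statement (iii). Your plan to verify the deforming-ideal hypotheses and then read off the conclusions of those two cited theorems matches the paper's intent exactly, including the observations that $\soc(A)\subseteq I$, that the undeformed multiplication is recovered on the quotient by the socle, and that the algebraically closed upgrade traces back to a Hochschild-type obstruction.

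One small technical caveat in your description of the internals: the acyclicity of $Q_B$ does not by itself force every algebra automorphism of $B$ to be, up to an inner automorphism, merely a permutation of the primitive idempotents; automorphisms of a (basic, acyclic) quiver algebra can also act nontrivially on the arrow spaces even after normalising away inner ones. What acyclicity does guarantee, and what is actually used in \cite{SY3}, is that the induced automorphism of $\widehat B$ that intertwines the two $B$-actions on $I$ can be taken \emph{positive}, i.e.\ nondecreasing on the $\mathbb{Z}$-grading of objects $e_{m,i}$ of $\widehat B$. Since your argument at that point is only sketching how \cite[Theorem~4.1]{SY3} is proved rather than re-proving it, this does not affect the correctness of your overall approach, which coincides with the paper's.
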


\begin{proof}
The statements (i), (ii) and the final part of the theorem
follow from \cite[Theorems 3.2 and 4.1]{SY1}.
The statement (iii) follows from \cite[Theorem~4.1]{SY3}.
\end{proof}

\section{Proof of the necessity part of Theorem~\ref{th:main}}

Let $A$ be a self-injective algebra over a field $K$, and
assume that $\Gamma_A$ admits a hereditary almost regular
stable slice $\Delta$.
Let $M$ be the direct sum of all indecomposable modules
in $\mod A$ lying on $\Delta$, 
$I$ the right annihilator
$r_A(M) = \{ a \in M \, | \, M a = 0 \}$ of $M$ in $\mod A$,
$B = A / I$, and $H = \End_B(M)$.
We note that $H = \End_A(M)$ and hence is a hereditary algebra.
Moreover, the valued quiver $Q_H$ of $H$ is the opposite quiver
$\Delta^{\op}$ of $\Delta$.
This implies that every non-zero non-isomorphism in $\mod A$
between two modules lying on $\Delta$ is a finite sum of compositions 
of irreducible homomorphisms in $\mod A$ corresponding
to valued arrows of $\Delta^{\op}$.

\begin{lemma}
\label{lem:3.1}
Let $X$ be an indecomposable module from $\Delta$.
Then the following statements hold.
\begin{enumerate}[\upshape (i)]
 \item 
  $X$ is an injective $B$-module, if $X$ is the radical
  of an indecomposable projective $A$-module.
 \item 
  $\tau_B^{-1} X = \tau_A^{-1} X$,
  if $X$ is not the radical of an indecomposable 
  projective $A$-module.
\end{enumerate}
\end{lemma}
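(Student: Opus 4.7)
The plan is to analyze the almost split sequence in $\mod A$ starting at $X$ and compare it with the situation in $\mod B$. Since $X$ is a summand of $M$ and $I = r_A(M)$, we have $XI = 0$, i.e., $X \in \mod B$; the whole question reduces to understanding which modules near $\Delta$ in $\Gamma_A$ are annihilated by $I$.

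For (ii), I would take the almost split sequence
\[
0 \to X \to Y \to \tau_A^{-1} X \to 0
\]
in $\mod A$. Since $X$ is not the radical of any indec projective, no arrow out of $X$ in $\Gamma_A$ ends at a projective, so all indec summands of $Y$ and the module $\tau_A^{-1} X$ are non-projective; by the stable slice axiom (3), they lie in $\Delta \cup \tau_A^{-1}\Delta$. The crucial step is to verify that every module in $\tau_A^{-1}\Delta$ is killed by $I$, so that the whole sequence lies in the full subcategory $\mod B \subseteq \mod A$. Granted this, the sequence is almost split in $\mod A$ with all terms in $\mod B$, and both non-splitness and the right almost split property descend along the full embedding $\mod B \hookrightarrow \mod A$; so it is almost split in $\mod B$ as well, whence $\tau_B^{-1} X = \tau_A^{-1} X$.

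For (i), suppose $X = \rad P$ for some indec projective $P$; by almost right regularity, $X$ is a sink of $\Delta$. I would argue that $X$ is $B$-injective directly: given a $B$-monomorphism $X \hookrightarrow E$, use the $A$-injectivity of $P$ to extend the inclusion $X \hookrightarrow P$ to an $A$-map $\phi \colon E \to P$. The image $\phi(E)$ is an $A$-submodule of $P$ containing the maximal submodule $X = \rad P$, hence equals $X$ or $P$. The case $\phi(E) = P$ can be excluded once one knows $P \notin \mod B$ (i.e., $PI \ne 0$), because $\phi(E)$ is a $B$-quotient of $E$ and hence killed by $I$. Then $\phi(E) = X$ and $\phi \colon E \to X$ is a retraction of the embedding, proving $X$ is $B$-injective.

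The principal obstacle is therefore the pair of claims: (a) every module in $\tau_A^{-1}\Delta$ is killed by $I$; and (b) $PI \ne 0$ whenever $\rad P \in \Delta$. Both should follow from the hereditary assumption on $H = \End_A(M)$ together with the mesh structure around $\Delta$ in $\Gamma_A$: as noted in the paragraph preceding the lemma, every non-isomorphism between modules on $\Delta$ is a sum of compositions of irreducible $A$-homomorphisms corresponding to the arrows of $\Delta^{\op}$. This gives precise control over the generators of $I$, from which one can identify exactly which neighboring indec modules of $\Delta$ belong to $\mod B$ and which do not, settling (a) and (b).
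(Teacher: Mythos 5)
Your overall outline matches the paper's strategy, and your final paragraph correctly isolates the two claims that carry the real weight, but you leave both of them open and your claim~(a) is in fact stated too strongly; so there is a genuine gap.

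Concerning (a). You want every module in $\tau_A^{-1}\Delta$ to be killed by $I$. But $\tau_A^{-1}\Delta$ contains $\tau_A^{-1}(\rad P)=P/\soc P$ whenever $\rad P$ lies on $\Delta$, and $(P/\soc P)I=(PI+\soc P)/\soc P$ need not vanish (one only has $\soc P\subseteq PI\subseteq\rad P$ in general). What the lemma actually requires is the weaker statement that $Y=\tau_A^{-1}X$ lies in $\mod B$ for those $X\in\Delta$ that are \emph{not} radicals of projectives, and the proof of even this requires more than the observation that $H(\Delta)$ is hereditary. The paper proves it by a different route: it takes the projective cover $f\colon P(Y)\to Y$ and shows $f$ factors through $\add M$, which forces $YI=0$. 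The factorization is obtained by pushing $f$ successively through minimal right almost split maps. Here the \emph{almost right regularity} enters in an essential way that your sketch does not use: when you factor through the right almost split map $E\to Y$ the summands of $E$ lie in $\Delta\cup\tau_A^{-1}\Delta$ and are non‑projective because $X$ is not a radical; and for a summand $Z=\tau_A^{-1}W$ in $\tau_A^{-1}\Delta$, the arrow $X\to Z$ gives an arrow $W\to X$ inside $\Delta$, so $W$ is not a sink of $\Delta$, hence by almost right regularity $W$ is not the radical of a projective. This is what lets you iterate without ever meeting a projective summand, and finiteness plus acyclicity of $\Delta$ makes the iteration terminate inside $\add M$. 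Once $YI=0$, the passage to $\tau_B^{-1}X=\tau_A^{-1}X$ is as you describe. Your formulation ``verify that every module in $\tau_A^{-1}\Delta$ is killed by $I$'' both over‑claims and, when restricted to the needed cases, would be circular unless set up as the inductive argument above.

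Concerning (b), this one is genuinely easy and you should just prove it: since $M$ has no projective direct summand and $A$ is self‑injective, $M\cdot\soc(A)=0$, so $\soc(A)\subseteq r_A(M)=I$; hence $PI\supseteq P\cdot\soc(A)=\soc P\neq0$, i.e.\ $P\notin\mod B$. With this in hand your extension argument for (i) goes through and coincides with the paper's (which phrases it as: $X=\rad P$ is the largest $B$-submodule of the $A$-injective $P$, and the largest $B$-submodule of an $A$-injective is $B$-injective).

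So: right skeleton, same two pressure points as the paper, but the hereditary hypothesis alone does not close (a) — you need the almost right regularity and the iteration through right almost split maps — and (b), which you treat as an obstacle, is a one‑line consequence of $\soc(A)\subseteq I$.
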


\begin{proof}
(i)
Assume $X = \rad P$ for an indecomposable projective
$A$-module $P$.
Since $X$ lies on $\Delta$ and $P$ is not in $\Delta$,
$X$ is the largest right $B$-submodule
of the injective $A$-module $P$, and consequently
$X$ is an injective $B$-module, because $B$ is a quotient
algebra of $A$.

(ii)
Assume that $X$ is not the radical of an indecomposable 
projective $A$-module.
Let $Y = \tau_A^{-1} X$ and $f : P(Y) \to Y$
be a projective cover of $Y$ in $\mod A$.
Since $\Delta$ is an almost right regular stable
slice of $\Gamma_A$, we conclude that $f$ factors
through a module $M^r$ for some positive
integer $r$. But then $Y$ is a $B$-module, and
hence $\tau_B^{-1} X = \tau_A^{-1} X$.
Clearly, then $X$ is not an injective $B$-module. 
\end{proof}

\begin{lemma}
\label{lem:3.2}
The following statements hold.
\begin{enumerate}[\upshape (i)]
 \item 
  $\Hom_B(\tau_B^{-1} M, M) = 0$.
 \item 
  $\id_B M \leq 1$.
\end{enumerate}
\end{lemma}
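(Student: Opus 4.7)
The plan is to prove (i) by a case analysis on the indecomposable summands of $M$ via Lemma~\ref{lem:3.1}, and then to deduce (ii) from (i) using Auslander--Reiten duality in $\mod B$. For (i), I write $M = \bigoplus_j X_j$ as the direct sum of the indecomposables lying on $\Delta$ and reduce to showing $\Hom_B(\tau_B^{-1} X_j, X_i) = 0$ for every pair. By Lemma~\ref{lem:3.1}, either $X_j$ is the radical of an indecomposable projective $A$-module, in which case part~(i) of that lemma makes $X_j$ injective in $\mod B$ and thus $\tau_B^{-1} X_j = 0$, or else part~(ii) gives $\tau_B^{-1} X_j = \tau_A^{-1} X_j$. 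In the second case $\tau_A^{-1} X_j$ and $X_i$ both lie in $\mod B$, so the fullness of $\mod B \subseteq \mod A$ lets me compute the Hom in $\mod A$. A non-zero $A$-morphism $\tau_A^{-1} X_j \to X_i$ would correspond, by Auslander--Reiten theory, to a path in $\Gamma_A$ running from $\tau_A^{-1}\Delta$ into $\Delta$; iterating slice conditions~(2) and~(3) shows that arrows into $\Delta$ come only from $\Delta$, $\tau_A \Delta$, or projective-injective modules, ruling out such a path.

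For (ii), I apply the Auslander--Reiten formula
\[
  D\Ext_B^1(N,M) \;\cong\; \overline{\Hom}_B(\tau_B^{-1} M,\, N),
\]
where $\overline{\Hom}_B$ denotes $\Hom_B$ modulo morphisms factoring through an injective $B$-module. Taking $N = M$ and invoking~(i) immediately gives $\Ext_B^1(M,M) = 0$. To upgrade this to $\id_B M \leq 1$, it suffices to verify $\Ext_B^2(S,M) = 0$ for every simple $B$-module $S$; writing $\Ext_B^2(S,M) = \Ext_B^1(\Omega_B S, M)$ and re-applying the AR formula recasts the vanishing as $\overline{\Hom}_B(\tau_B^{-1} M, \Omega_B S) = 0$. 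Using that $\Omega_B S$ embeds in the projective cover $P_B(S)$ in $\mod B$, and that on the non-radical-of-projective summands $\tau_B^{-1} M$ coincides with $\tau_A^{-1} M'$ and therefore sits strictly to the right of $\Delta$ in $\Gamma_A$, I intend to re-run the slice-based path analysis from~(i) to show that any such morphism factors through an injective $B$-module and is therefore killed in $\overline{\Hom}_B$.

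The main obstacle is the path-vanishing step in case~(b) of~(i), which then underlies the argument in~(ii). One must rule out non-zero $A$-morphisms from $\tau_A^{-1} X_j$ back into $\Delta$ while allowing paths in $\Gamma_A$ to traverse projective-injective modules, which the slice axioms do not prohibit outright; here the ``almost right regular'' assumption, forcing each $\rad P$ lying on $\Delta$ to be a sink of $\Delta$, pins down the behaviour near the projective-injectives and closes this gap. With that separation in place, (ii) becomes the homological translation of~(i) through AR duality, together with the elementary observation that syzygies of simple $B$-modules embed into indecomposable projective $B$-modules.
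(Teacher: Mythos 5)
Your proof of part (i) hinges on the claim that a non-zero $A$-morphism $\tau_A^{-1} X_j \to X_i$ ``would correspond, by Auslander--Reiten theory, to a path in $\Gamma_A$ running from $\tau_A^{-1}\Delta$ into $\Delta$''. This is the genuine gap: morphisms in $\mod A$ are not in general supported on paths of $\Gamma_A$ --- the infinite radical $\rad^\infty_A$ can carry non-zero maps that are invisible to the AR quiver --- and nothing in the hypotheses grants generalized standardness for morphisms \emph{from} $\tau_A^{-1}\Delta$ into $\Delta$. The hereditary slice hypothesis only tells you that non-zero non-isomorphisms between modules \emph{on} $\Delta$ are sums of compositions of irreducibles along $\Delta^{\op}$; it says nothing directly about morphisms whose domain lies off the slice. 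Invoking ``almost right regular'' to pin down paths through projective-injectives does not repair this, because the failure is not about paths at all. The paper argues in the opposite direction: it first shows, as a consequence of the proof of Lemma~\ref{lem:3.1} (the projective cover of $\tau_A^{-1}X$ factors through a power of $M$), that there is an epimorphism $M^s \twoheadrightarrow \tau_B^{-1}M$, and then any non-zero $h:\tau_B^{-1}M \to M$ produces, after precomposition, a non-zero non-isomorphism $M^s \to M$ factoring through a module not lying on $\Delta$; this contradicts the structure of $\End_B(M)$ as hereditary with valued quiver $\Delta^{\op}$. That contradiction lives entirely inside $\add(M)$, where the hereditary hypothesis actually applies.

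For (ii) you missed the decisive and much shorter argument: $M$ is by construction a \emph{faithful} $B$-module (since $B = A/\,r_A(M)$), so there is a monomorphism $B \hookrightarrow M^t$; left-exactness of $\Hom_B(\tau_B^{-1}M,-)$ together with (i) then gives $\Hom_B(\tau_B^{-1}M, B)=0$, and the standard criterion $\id_B M \le 1 \iff \Hom_B(\tau_B^{-1}M, B)=0$ finishes immediately. Your route through $\Ext^1_B(M,M)=0$, dimension shifting to $\Ext^2_B(S,M)=\Ext^1_B(\Omega_B S, M)$, and a second run of the path analysis is both more laborious and again rests on the same unjustified path-morphism identification. (If you had instead observed that $\Omega_B S$ embeds in $B$ and then used $B \hookrightarrow M^t$, you would recover the paper's argument; but that embedding and the consequent reduction to (i) is exactly the faithfulness step you left out.)
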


\begin{proof}
(i)
It follows from Lemma~\ref{lem:3.1}
that there is an epimorphism of right $B$-modules
$g : M^s \to \tau_B^{-1} M$ for some positive integer $s$.
Then we conclude that  $\Hom_B(\tau_B^{-1} M, M) = 0$,
because $\End_A(M) = \End_B(M)$ is a hereditary algebra
whose valued quiver 
is the opposite quiver $\Delta^{\op}$ of $\Delta$.

(ii)
Since $M$ is a faithful $B$-module there is a monomorphism
of right $B$-modules
$B \to M^t$
for some positive integer $t$
(see \cite[Lemma~II.5.5]{SY9}),
and hence\linebreak $\Hom_B(\tau_B^{-1} M, B) = 0$, by (i).
This implies that $\id_B M \leq 1$ (see \cite[Proposition~III.5.4]{SY9}).
\end{proof}

\begin{lemma}
\label{lem:3.3}
The following statements hold.
\begin{enumerate}[\upshape (i)]
 \item
  For any valued arrow $U \xrightarrow{(c,c')} V$ in $\Gamma_B$
  with $U$ in $\Delta$,
  $V$ belongs to $\Delta$ or to $\tau_B^{-1}\Delta$.
 \item
  For any valued arrow $V \xrightarrow{(d,d')} U$ in $\Gamma_B$
  with $U$ in $\Delta$,
  $V$ belongs to $\Delta$ or to $\tau_B\Delta$.
\end{enumerate}
\end{lemma}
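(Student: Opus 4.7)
The plan is to deduce both assertions of Lemma~\ref{lem:3.3} from the stable slice axioms~(2) and~(3) for $\Gamma_A$, by transporting irreducible morphisms along the embedding $\mod B \hookrightarrow \mod A$ and invoking Lemmas~\ref{lem:3.1} and~\ref{lem:3.2}. For part~(i), let $U \to V$ be an arrow in $\Gamma_B$ with $U \in \Delta$. I would first dispose of the exceptional case $U = \rad P$ for some indecomposable projective $A$-module $P$: by Lemma~\ref{lem:3.1}(i), $U$ is then injective in $\mod B$, so its source map in $\mod B$ is the canonical surjection $U \to U/\soc U$; since almost right regularity forces $U$ to be a sink of $\Delta$, the indecomposable summands of $U/\soc U$ can be identified as modules in $\tau_B^{-1}\Delta$ by applying axiom~(3) to the predecessors of $\tau_A^{-1}U$ in $\Gamma_A$.

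In the generic case $U \ne \rad P$, Lemma~\ref{lem:3.1}(ii) gives $\tau_B^{-1}U = \tau_A^{-1}U$, so the almost split sequences
$$
0 \to U \to F_A \to \tau_A^{-1}U \to 0 \quad \text{in} \ \mod A
$$
and $0 \to U \to F_B \to \tau_B^{-1}U \to 0$ in $\mod B$ share the same end terms. By axiom~(3), every non-projective indecomposable summand $W$ of $F_A$ lies in $\Delta$ or in $\tau_A^{-1}\Delta$. The summands in $\Delta$ are $B$-modules, and those of the form $\tau_A^{-1}X$ with $X \in \Delta$ and $X \ne \rad Q$ are $B$-modules by Lemma~\ref{lem:3.1}(ii). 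The key step is then to identify $F_B$ with the direct sum of the $B$-module summands of $F_A$; granted this, the summand $V$ of $F_B$ automatically lies in $\Delta \cup \tau_B^{-1}\Delta$. Part~(ii) is handled dually, using sink maps, axiom~(2), and the analogous equality $\tau_B U = \tau_A U$ off the symmetric exceptional locus.

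The principal obstacle is precisely this identification of $F_B$ as the maximal $B$-summand of $F_A$: the middle term $F_A$ may contain projective $A$-summands $P$ that do not annihilate $I$, and summands of the form $\tau_A^{-1}(\rad Q) = Q/\soc Q$ for $\rad Q \in \Delta$, which need not be $B$-modules, and one must argue that these are exactly the pieces discarded when passing to $\mod B$. To this end I would use the universal property of almost split sequences in $\mod B$, together with Lemma~\ref{lem:3.2} (yielding $\Hom_B(\tau_B^{-1}M, M) = 0$ and $\id_B M \le 1$), to verify that the $B$-summands of $F_A$ assemble into a non-split extension of $\tau_B^{-1}U$ by $U$ satisfying the defining property of the almost split sequence in $\mod B$, thereby ruling out any irreducible morphism $U \to V$ in $\mod B$ with target $V$ outside $\Delta \cup \tau_B^{-1}\Delta$.
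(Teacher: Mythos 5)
Your treatment of part~(i) is essentially in the spirit of what the paper intends (the paper only says ``(i) follows from Lemma~\ref{lem:3.1} and the fact that $\Delta$ is a stable slice in $\Gamma_A$''), and your worry about matching up $F_B$ with the $B$-summands of $F_A$ is a legitimate point the paper glosses over. In fact the proof of Lemma~\ref{lem:3.1}(ii) already shows that, for $U$ not the radical of a projective, the whole almost split sequence in $\mod A$ ending at $\tau_A^{-1}U$ lies in $\mod B$, so it is literally the almost split sequence in $\mod B$; thus $F_A = F_B$ and no discarding of projective $A$-summands is needed. Your extra machinery for part~(i) is therefore not wrong, just heavier than necessary.

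Part~(ii), however, has a genuine gap. You dispose of it by asserting it is ``handled dually, using sink maps, axiom~(2), and the analogous equality $\tau_B U = \tau_A U$ off the symmetric exceptional locus.'' There is no such dual equality available: Lemma~\ref{lem:3.1} establishes only that $\tau_B^{-1}X = \tau_A^{-1}X$ when $X$ is not the radical of a projective, and its proof crucially uses the \emph{almost right regular} hypothesis (radicals of projectives are sinks of $\Delta$) to get a projective cover of $\tau_A^{-1}X$ to factor through $\add M$. The hypothesis is not left--right symmetric, and one cannot conclude that an injective envelope of $\tau_A X$ factors through $\add M$ without something more; indeed, the paper proves exactly that factoring later, in Lemma~\ref{lem:3.4}(i), \emph{using} Lemma~\ref{lem:3.3}(ii) as an input, so the dual-looking statement is downstream of the one you are trying to prove.

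The paper's actual route for~(ii) is different and avoids the duality issue entirely. Given an arrow $V \to U$ in $\Gamma_B$ with $U \in \Delta$ and $V \notin \Delta$, one first shows $V$ is not injective in $\mod B$: if it were, faithfulness of $M$ in $\mod B$ produces an epimorphism $M^p \to D(B)$, hence a chain $W \to V \to U$ with nonzero composite and $W$ on $\Delta$, contradicting that $\Delta^{\op}$ is the quiver of the hereditary algebra $\End_B(M)$ (a nonzero non-isomorphism between modules on $\Delta$ cannot factor through something off $\Delta$). Once $V$ is known to be non-injective, there is an arrow $U \to \tau_B^{-1}V$ in $\Gamma_B$, and one applies the already-proved part~(i) to place $\tau_B^{-1}V$ in $\Delta$ (it cannot lie in $\tau_B^{-1}\Delta$ since $V \notin \Delta$), hence $V \in \tau_B\Delta$. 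So part~(ii) is bootstrapped from part~(i), not obtained by dualizing it; you need to replace your ``dual argument'' with this non-injectivity step.
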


\begin{proof}
(i)
It follows from Lemma~\ref{lem:3.1}
and the fact that $\Delta$ is a stable slice in $\Gamma_A$.

(ii)
Assume that $V \xrightarrow{(d,d')} U$ is a valued arrow 
in $\Gamma_B$ with $U$ in $\Delta$.
We may assume that $V$ is not on $\Delta$.
We claim that $V$ is not an injective $B$-module.
Suppose
it is not the case.
Because $M$ is a faithful $B$-module, there is an epimorphism 
of right $B$-modules $M^p \to D(B)$
for some positive integer $p$, by the dual of
\cite[Lemma~II.5.5]{SY9},
because $l_A(D(M)) = r_A(M) = 0$.
Then there exist homomorphisms of right $B$-modules
$W \xrightarrow{f} V \xrightarrow{g} U$
with $g f \neq 0$ and $W$ an indecomposable 
$B$-module lying on $\Delta$.
Since $V$ is not in $\Delta$, this contradicts
to the fact that $\Delta^{\op}$ is the valued quiver
of the algebra $\End_B(M)$.
Therefore, $V$ is not injective in $\mod B$.
But then $\tau_B^{-1} V$ is an indecomposable
module and there is a valued arrow
$U \xrightarrow{(d',d)} \tau_B^{-1}V$ in $\Gamma_B$
(see \cite[Lemma~III.9.1 and Proposition~III.9.6]{SY9}).
Then it follows from (i)  that 
$\tau_B^{-1} V$ belongs to $\Delta$ or to $\tau_B^{-1}\Delta$.
Since $V$ is not in $\Delta$, we conclude that
$\tau_B^{-1} V$ belongs to  $\Delta$, and hence 
$V$ belongs to $\tau_B\Delta$.
\end{proof}

\begin{lemma}
\label{lem:3.4}
The following statements hold.
\begin{enumerate}[\upshape (i)]
 \item 
  $\Hom_B(M, \tau_B M) = 0$.
 \item 
  $\pd_B M \leq 1$.
\end{enumerate}
\end{lemma}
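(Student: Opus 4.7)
The plan is to prove Lemma~\ref{lem:3.4} by closely paralleling the proof of Lemma~\ref{lem:3.2}, with Lemma~\ref{lem:3.3}(ii) now playing the role that Lemma~\ref{lem:3.1} played there. Accordingly, I would first establish (i), and then deduce (ii) from (i) using the faithfulness of $M$, exactly as (ii) was deduced from (i) in Lemma~\ref{lem:3.2}.

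For (i), I will mimic the strategy of Lemma~\ref{lem:3.2}(i): whereas that proof rested on an epimorphism $g:M^s\twoheadrightarrow\tau_B^{-1}M$, the dual goal here is to produce a monomorphism of right $B$-modules $\iota:\tau_B M\hookrightarrow M^s$ for some positive integer $s$. The key input is Lemma~\ref{lem:3.3}(ii): for each indecomposable $X\in\Delta$ that is non-projective in $\mod B$, the Auslander--Reiten sequence $0\to\tau_B X\to E_X\to X\to 0$ in $\mod B$ has middle term $E_X\in\add(M\oplus\tau_B M)$. From these local embeddings $\tau_B X\hookrightarrow E_X$ one wants to extract a global embedding into a pure power of $M$. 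Once $\iota$ is in hand, the vanishing $\Hom_B(M,\tau_B M)=0$ follows exactly as in Lemma~\ref{lem:3.2}(i): any nonzero $h:M\to\tau_B M$ would give a nonzero composition $\iota\circ h\in\Hom_B(M,M^s)=H^s$ factoring through $\tau_B M\notin\add M$ (since $\Delta$ and $\tau_B\Delta$ are disjoint), contradicting the hereditary structure of $H=\End_B(M)$ with valued quiver $Q_H=\Delta^{\op}$.

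For (ii), I will use the epimorphism $\pi:M^p\twoheadrightarrow D(B)$ produced from the faithfulness of $M$ via the dual of \cite[Lemma~II.5.5]{SY9}, which was already invoked in the proof of Lemma~\ref{lem:3.3}(ii). Applying $\Hom_B(-,\tau_B M)$ to $\pi$ gives an injection
\[
\Hom_B(D(B),\tau_B M)\hookrightarrow\Hom_B(M^p,\tau_B M)\cong\Hom_B(M,\tau_B M)^p,
\]
whose right-hand side vanishes by (i). Hence $\Hom_B(D(B),\tau_B M)=0$, and by the dual of \cite[Proposition~III.5.4]{SY9} this is equivalent to $\pd_B M\leq 1$.

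The main obstacle lies in (i), specifically in promoting the AR-inclusions $\tau_B X\hookrightarrow E_X$ supplied by Lemma~\ref{lem:3.3}(ii) to a global monomorphism $\tau_B M\hookrightarrow M^s$ into a pure power of $M$: the $\tau_B M$-summands appearing in $E_X$ must be absorbed, most naturally by an iterative argument that exploits the acyclicity of $\Delta^{\op}$ together with the finiteness of $\Delta$. A possible alternative route, should the direct construction prove delicate, is to bypass the monomorphism altogether and reason with the long exact sequence of $\Hom_B(M,-)$ applied to the summed AR-sequence $0\to\tau_B M\to E\to M'\to 0$, combined with the vanishing $\Ext^1_B(M,M)=0$ that follows from Lemma~\ref{lem:3.2}(i) via the Auslander--Reiten formula.
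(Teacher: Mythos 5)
Your proposal follows essentially the same route as the paper's proof: part (ii) is argued identically, and for part (i) the paper also reduces $\Hom_B(M,\tau_B M)=0$ to a monomorphism $\tau_B M\hookrightarrow M^q$ — obtained there by factoring the injective envelope of $\tau_B M$ through $M^q$ via Lemma~\ref{lem:3.3}, which is just a compact packaging of your iterative AR-sequence construction (any monomorphism out of $\tau_B M$, having no injective summands, factors through the minimal left almost split maps, whose targets lie in $\add(M\oplus\tau_B M)$, and acyclicity plus finiteness of $\Delta$ terminates the iteration). One caveat on your ``alternative route'': the Auslander--Reiten formula applied to $\Ext^1_B(M,M)=0$ gives only $\overline{\Hom}_B(M,\tau_B M)=0$, i.e.\ vanishing modulo maps factoring through injectives, so it does not by itself yield $\Hom_B(M,\tau_B M)=0$ and would need a supplementary argument; the primary monomorphism-based route you describe is the one that actually closes the proof.
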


\begin{proof}
(i)
Consider an injective envelope 
$h : \tau_B M \to E(\tau_B M)$ 
of $\tau_B M$ in $\mod B$.
Since $\tau_B M$ has no injective direct summands,
it follows from Lemma~\ref{lem:3.3}
that $h$ factors through a module $M^q$ for some
positive integer $q$, and hence there is a monomorphism
of right $B$-modules
$u : \tau_B M \to M^q$ .
But then 
 $\Hom_B(M, \tau_B M) = 0$,
because $\Delta^{\op}$ is the valued quiver 
of the hereditary algebra $\End_B(M)$.

(ii)
Since $M$ is a faithful $B$-module there is an epimorphism
of right $B$-modules
$M^p \to D(B)$
for a positive integer $p$,
and consequently 
$\Hom_B(D(B), \tau_B M) = 0$.
This implies that $\pd_B M \leq 1$
(see \cite[Proposition~III.5.4]{SY9}).
\end{proof}

\begin{proposition}
\label{prop:3.5}
The following statements hold.
\begin{enumerate}[\upshape (i)]
 \item
  $M$ is a tilting $B$-module.
 \item
  $T = D(M)$ is a tilting module in $\mod H$.
 \item
  There is a canonical isomorphism of $K$-algebras
  $B \xrightarrow{\sim} \End_H(T)$.
 \item
  $\Delta$ is the section $\Delta_T$ of the connecting component 
  $\cC_T$ of $\Gamma_B$ determined by $T$.
\end{enumerate}
\end{proposition}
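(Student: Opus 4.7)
The plan is to establish the four statements in order, with (ii) and (iii) coming quickly from (i) via standard Brenner-Butler tilting theory, and (iv) coming from identifying the section produced by the resulting tilting triple with $\Delta$.

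For (i), I would verify the three defining conditions for $M$ to be a tilting $B$-module. The bound $\pd_B M \leq 1$ is Lemma~\ref{lem:3.4}(ii). For the vanishing of $\Ext^1_B(M,M)$, the Auslander-Reiten formula (applicable because $\pd_B M \leq 1$) combined with $\Hom_B(M,\tau_B M) = 0$ from Lemma~\ref{lem:3.4}(i) yields $\Ext^1_B(M,M) = 0$. The third condition---that the number of pairwise non-isomorphic indecomposable summands of $M$ equal the rank of $K_0(B)$, or equivalently that there exist a short exact sequence $0 \to B \to M_0 \to M_1 \to 0$ with $M_0, M_1 \in \add M$---is the principal obstacle in the proof. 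The homological vanishings above, combined with the faithfulness of $M$, only make $M$ a partial tilting $B$-module; to promote it to a tilting module I would invoke the AR-quiver structure afforded by Lemma~\ref{lem:3.3}. Combining that lemma with the stable-slice axioms (connectedness and acyclicity) shows that $\Delta$ is a section of some connected component $\cC$ of $\Gamma_B$ in the sense of Liu, and the classical section theorem then delivers the desired $\add M$-resolution of $B$.

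For (ii) and (iii), once $M$ is known to be a tilting $B$-module with $H = \End_B(M)$ hereditary, the Brenner-Butler theorem yields that $T = D(M)$ is a tilting right $H$-module, and that the $B$-module structure of $M$ induces a canonical algebra isomorphism $B \xrightarrow{\sim} \End_H(T)$.

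For (iv), the tilting triple $(H, T, B)$ determines a connecting component $\cC_T$ of $\Gamma_B$ with a distinguished section $\Delta_T$, whose vertices are the indecomposable summands of $\Hom_H(T, D(H)) \cong D(T) \cong M$. The vertex sets of $\Delta_T$ and $\Delta$ therefore coincide, and since the valued arrows of both are governed by the hereditary algebra $H$ (whose valued quiver is $\Delta^{\op}$), we conclude $\Delta = \Delta_T$ as valued subquivers of $\Gamma_B$.
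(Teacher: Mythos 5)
Your proofs of (ii), (iii), and (iv) agree with the paper's: Brenner--Butler gives (ii) and (iii), and for (iv) you compute $\Hom_H(T,D(H)) \cong M$ and match vertex sets and arrows, exactly as the paper does. The genuine difference is in (i). The paper's route is constructive: it picks a $K$-basis $f_1,\dots,f_d$ of $\Hom_B(B,M)$, forms the monomorphism $f: B \to M^d$ (using faithfulness), takes the short exact sequence $0 \to B \to M^d \to N \to 0$, concludes via $\pd_B M \le 1$ and a standard argument that $M \oplus N$ is tilting, and then shows $N \in \add(M)$ by the following contradiction: an indecomposable summand $W$ of $N$ not on $\Delta$ satisfies $\Hom_B(M,W) \neq 0$, hence by Lemma~\ref{lem:3.3} also $\Hom_B(\tau_B^{-1}M, W) \neq 0$, so $\Ext^1_B(W,M) \cong D\Hom_B(\tau_B^{-1}M,W) \neq 0$ (using $\id_B M \le 1$ from Lemma~\ref{lem:3.2}), contradicting $\Ext^1_B(N,M)=0$. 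Your route instead appeals to the Liu--Skowro\'nski criterion after claiming that $\Delta$ is a section of a component of $\Gamma_B$; the paper does use that criterion, but only in Proposition~\ref{prop:4.4}, not here.

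The issue with your route is that the step ``Lemma~\ref{lem:3.3} plus connectedness and acyclicity of $\Delta$ imply $\Delta$ is a section of a component of $\Gamma_B$'' is left unjustified, and it is not formally immediate. Lemma~\ref{lem:3.3} gives the local ``slice conditions'' on immediate $\Gamma_B$-predecessors and successors of vertices of $\Delta$, but to call $\Delta$ a section (in the sense required by the criterion you invoke) one must also check that $\Delta$ meets each $\tau_B$-orbit of the relevant component at most once and that $\Delta$ is convex there; these require a separate combinatorial argument in the translation quiver $\Gamma_B$. Note also a mild organizational cost: you would in effect be proving a version of part (iv) before (i), which is fine in principle but means you cannot rely on the tilting-theory description of the connecting component to shortcut the section verification. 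The paper's direct $\Ext$-vanishing argument deliberately avoids having to establish the section property at this stage, deferring the full identification of $\Delta$ with $\Delta_T$ to (iv), where it follows for free from tilting theory. If you fill in the section argument, your approach works and has the aesthetic advantage of making (iv) nearly automatic; as written, that is the gap.
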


\begin{proof}
(i)
Let $f_1,\dots,f_d$ be a basis of the $K$-vector space
$\Hom_B(B,M)$.
Then we have a monomorphism 
$f : B \to M^d$ in $\mod B$,
induced by $f_1,\dots,f_d$,
and hence a short exact sequence
\[
  0 \to B \xrightarrow{f} M^d \xrightarrow{g} N \to 0
\]
in $\mod B$,
where $N = \Coker f$ and $g$ is a canonical epimorphism.
Then, applying standard arguments using $\pd_B M \leq 1$,
we conclude  (see the proof of \cite[Proposition~3.8]{SY10})
that $M \oplus N$ is a tilting $B$-module.
We prove now that $N$ belongs to the additive category
$\add(M)$ of $M$.
Assume to the contrary that there exists an indecomposable
direct summand $W$ of $N$ which does not belong to $\add(M)$,
or equivalently $W$ does not lie on $\Delta$.
Clearly, $\Hom_B(M,W) \neq 0$ because $W$ is a quotient
module of $M^d$.
Applying now Lemma~\ref{lem:3.3},
we conclude that $\Hom_B(\tau_B^{-1} M, W) \neq 0$.
Moreover, by Lemma~\ref{lem:3.2}, we have $\id_B M \leq 1$.
Then, applying \cite[Corollary~III.6.4]{SY9},
we infer that 
$\Ext_B^1(W, M) \cong D \Hom_B(\tau_B^{-1} M, W) \neq 0$,
which contradicts $\Ext_B^1(N, M)  = 0$.
Therefore, $M$ is a tilting module in $\mod B$.
We also note that the rank of $K_0(B)$
is the number of indecomposable modules
lying on $\Delta$.

(ii)--(iv)
It follows from tilting theory that $T = D(M)$
is a tilting module in $\mod H$ and there is a canonical
isomorphism of $K$-algebras
$B \xrightarrow{\sim} \End_H(T)$
(see \cite[Proposition~VIII.3.3]{SY11}).
In particular, $B$ is a tilted algebra of type $\Delta^{\op}$.
Moreover, we have isomorphisms of right $B$-modules
\[
  \Hom_H\big(T, D(H)\big) = 
  \Hom_H\big(D(M), D(H)\big) \cong 
  \Hom_{H^{\op}}\big(H, M\big) \cong M ,
\]
because $M$ is a right $H^{\op}$-module.
Therefore, $\Delta$ is the canonical section
$\Delta_T$ of the connecting component
$\cC_T$ of $\Gamma_B$ determined by $T$
(see \cite[Theorem~VIII.6.7]{SY11}).
\end{proof}

We may choose a set
$e_{1},\ldots ,e_{r}$ of pairwise orthogonal primitive 
idempotents of $A$ such that $1_{A}=e_{1}+\cdots +e_{r}$ and,
for some $n\leq r$, 
$\{ e_i \, | \, 1\leq i \leq n \}$ is the set of all idempotents in
$\{ e_i \, | \, 1\leq i \leq r \}$ which are not in $I$.
Then $e=e_{1}+\cdots +e_{n}$ is a \emph{residual identity} of $B=A/I$. 

Our next aim is to prove that $r_A(I) = e I$.

We denote by $J$ the trace ideal of $M$ in $A$, that is, the ideal of $A$ 
generated by the images of all homomorphisms from $M$ to $A$ in $\mod A$, 
and by $J'$ the trace ideal of the left $A$-module $D(M)$ in $A$. 
Observe that $I=l_{A}(D(M))$. 
Then we have the following lemma.

\begin{lemma}
\label{lem:3.6}
We have $J\subseteq I$ and $J'\subseteq I$.
\end{lemma}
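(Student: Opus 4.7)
I focus on $J\subseteq I$; the containment $J'\subseteq I$ will then follow by a parallel dual argument, replacing $M$ by the left $A$-module $D(M)$ and using the identity $I=l_{A}(D(M))$ noted just above.

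Unpacking $J\subseteq I$, it suffices to show that for every $f\in\Hom_{A}(M,A)$ and every $m\in M$ we have $M\cdot f(m)=0$. Fix $f$ and $x\in M$. The map $\alpha_{x}\colon A\to M$ given by $a\mapsto xa$ is right $A$-linear, so the composition
\[
\phi_{x,f}\colon M\xrightarrow{f}A\xrightarrow{\alpha_{x}}M
\]
is an element of $H=\End_{A}(M)$, and $\phi_{x,f}=0$ for all $x,f$ is equivalent to $M\cdot f(m)=0$ for all $m$. Decomposing $A=\bigoplus_{i}P_{i}$ into indecomposable projectives and writing $f=(f_{i})$, $\alpha_{x}=(\alpha_{x}^{i})$, we reduce to proving each summand $\alpha_{x}^{i}\circ f_{i}$ vanishes. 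Since $\Delta$ contains no projectives, $M$ has no projective direct summand, so $f_{i}(M)\subseteq\rad P_{i}$; moreover, any homomorphism $P_{i}\to M$ restricted to $\rad P_{i}=P_{i}\cdot\rad A$ has image in $M\cdot\rad A=\rad M$. Consequently, $\alpha_{x}^{i}\circ f_{i}$ factors as $M\to\rad P_{i}\to\rad M\hookrightarrow M$.

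The remaining task is to show this composite vanishes. When $\rad P_{i}$ is an indecomposable module lying on $\Delta$, say $\rad P_{i}=X_{k}$, almost right regularity forces $X_{k}$ to be a sink of $\Delta$; the hereditary structure of $H$ with quiver $\Delta^{\op}$ then implies that $\Hom_{A}(X_{k},X_{j})=0$ for $j\neq k$, so $\Hom_{A}(X_{k},M)\cong\End_{A}(X_{k})=D_{k}$, generated by the inclusion $X_{k}\hookrightarrow M$. The image of this inclusion is the summand $X_{k}\not\subseteq\rad M$, so any map $X_{k}\to M$ with image in $\rad M$ must be zero, whence $\alpha_{x}^{i}\circ f_{i}=0$. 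For the remaining case, where $\rad P_{i}$ does not lie on $\Delta$, I would mimic the technique in the proof of Lemma~\ref{lem:3.1}(ii): use the almost right regularity and projective-cover machinery to factor the relevant map through $M^{r}$, and then kill the resulting composition using the hereditary structure of $H$ together with Lemma~\ref{lem:3.3}. I expect this last case to be the main technical obstacle, as it requires a careful case analysis of the position of $\rad P_{i}$ in $\Gamma_{A}$ relative to $\Delta$ and $\tau\Delta$ in order to rule out any residual non-zero contribution.
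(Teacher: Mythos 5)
Your proposal is incomplete, and you say so yourself: the case where $\rad P_i$ does not lie on $\Delta$ is left unresolved with the remark that it is ``the main technical obstacle.'' This is not a corner case---for most indecomposable projectives $P_i$ the module $\rad P_i$ will not be on $\Delta$---so the argument as written does not establish $J\subseteq I$. Moreover, the sketch you offer (``mimic Lemma~\ref{lem:3.1}(ii)'') does not clearly transfer: that lemma factors a projective cover of $\tau_A^{-1}X$ through a power of $M$ in order to identify $\tau_B^{-1}$ with $\tau_A^{-1}$, whereas here you would need to control $\Hom_A(\rad P_i,M)$ for a module $\rad P_i$ that is not even a summand of $M$, and Lemma~\ref{lem:3.3} speaks about arrows in $\Gamma_B$, not about maps from arbitrary non-$\Delta$ modules.

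The paper's argument avoids the decomposition of $A$ into indecomposable projectives and the resulting case analysis entirely. It argues by contradiction: if $J\not\subseteq I=r_A(M)$, pick $j\in J$ and $m\in M$ with $mj\neq 0$ and form the $A$-linear map $f\colon A\to M$, $a\mapsto ma$, so $f(J)\neq 0$. Since $J$ is the trace ideal of $M$, there is an epimorphism $\varphi\colon M^s\to J$, and the composite $M^s\xrightarrow{\varphi}J\hookrightarrow A\xrightarrow{f}M$ is then a nonzero homomorphism between direct sums of modules on $\Delta$ that factors through the projective module $A$. This contradicts the fact that $\Delta^{\op}$ is the valued quiver of the hereditary algebra $H=\End_A(M)$: every nonzero non-isomorphism between modules on $\Delta$ is a sum of compositions of irreducible maps corresponding to arrows of $\Delta^{\op}$ (hence staying on $\Delta$, which contains no projectives), and any nonzero endomorphism of an indecomposable on $\Delta$ is an automorphism because $\Delta^{\op}$ is acyclic. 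Your first reduction (to showing $\alpha_x\circ f=0$) is a sound reformulation of $J\subseteq I$, and your argument for the sink case is correct, but the global contradiction via the trace ideal makes the entire case analysis, including the missing case, unnecessary.
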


\begin{proof}
First we show that $J\subseteq I$. 
By definition, there exists an epimorphism $\varphi : M^{s}\to J$ 
in $\mod A$ for some positive integer $s$. 
Suppose that $J$ is not contained in $I$. 
Then there exists a homomorphism $f : A\to M$ in $\mod A$ 
such that $f(J)\neq 0$. 
Then we have the sequence of homomorphisms in $\mod A$
\[
  M^s \xrightarrow{\varphi} J \xrightarrow{u} A \xrightarrow{f}  M
\]
with $u$ being the inclusion homomorphism.
But then $f( u \varphi) = f u \varphi \neq 0$ and this
contradicts the fact that $\Delta^{\op}$ is the valued
quiver of $H = \End_A(M)$.
Hence $J \subseteq I$.

Suppose now that $J'$ is not contained in $I$. 
Then there exists a homomorphism $f'\colon A\to D(M)$ in
$\mod A^{\op}$ such that $f'(J')\neq 0$. 
Moreover, we have in $\mod A^{\op}$ an epimorphism 
$\varphi'\colon D(M)^{m}\to J'$ for some positive integer $m$. 
Then $f'u'\varphi'\neq 0$ for the inclusion homomorphism 
$u'\colon J'\to A$.
Applying the duality functor $D : \mod A^{\op}\to\mod A$ 
we obtain homomorphisms in $\mod A$ 
\[
  D\big(D(M)\big) \xrightarrow{D(f')} D(A) \xrightarrow{D(u')} 
  D(J') \xrightarrow{D(\varphi')}  D\big(D(M)^{m}\big) ,
\]
where $D(D(M))\cong M$, $D(D(M)^{m})\cong M^{m}$, $D(A)\cong A$ in $\mod A$, 
and $D(\varphi')D(u')D(f')=D(f'u'\varphi')\neq 0$. 
This again contradicts the fact that $\Delta^{\op}$ is the valued
quiver of $H = \End_A(M)$.
\end{proof}

\begin{lemma}
\label{lem:3.7}
We have $l_{A}(I)=J$, $r_{A}(I)=J'$ and $I=r_{A}(J)=l_{A}(J')$.
\end{lemma}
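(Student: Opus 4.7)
My plan is to establish the four identities in two stages: first prove the two \emph{direct} identities $I = r_A(J)$ and $I = l_A(J')$ using that $A$ is self-injective, then deduce $l_A(I) = J$ and $r_A(I) = J'$ by applying Nakayama's Galois correspondence recalled in Section~\ref{sec:orbitAlgebras}.

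To prove $I = r_A(J)$: I would unfold $J = \sum_{f \in \Hom_A(M,A)} f(M)$, observing that $a \in r_A(J)$ iff $f(m)a = 0$ for every homomorphism $f : M \to A$ and every $m \in M$. Since each $f$ is right $A$-linear, $f(m)a = f(ma)$, so this is equivalent to $f(ma) = 0$ for all such $f$ and $m$. Because $A$ is self-injective, $A_A$ is an injective cogenerator of $\mod A$, so the family $\Hom_A(M,A)$ separates the points of $M$. The condition therefore reduces to $ma = 0$ for all $m \in M$, i.e.\ to $a \in r_A(M) = I$. The identity $I = l_A(J')$ is obtained by the left--right dual argument, starting from the observation recorded just before Lemma~\ref{lem:3.6} that $I = l_A(D(M))$ and using that $A$ is equally an injective cogenerator of $\mod A^{\op}$.

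For the second stage I would first note that both $J$ and $J'$ are two-sided ideals (for $J$: if $f : M \to A$ is right $A$-linear, then so is $x \mapsto a f(x)$ for any $a \in A$, hence $aJ \subseteq J$; the argument for $J'$ is symmetric). In particular $J$ is a left ideal and $J'$ is a right ideal of $A$. By Nakayama's theorem, $l_A$ and $r_A$ are mutually inverse Galois correspondences between left and right ideals of $A$, so
\[
l_A(I) = l_A(r_A(J)) = J \qquad \text{and} \qquad r_A(I) = r_A(l_A(J')) = J',
\]
which completes the proof together with the identities established in the first stage.

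I do not anticipate a substantial obstacle: the whole argument is driven by the cogenerator property on the appropriate side, and the only delicate point is keeping the left/right bookkeeping straight when passing between the trace ideals of $M_A$ and of $_A D(M)$ and their respective annihilators.
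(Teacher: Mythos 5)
Your proof is correct. The paper itself does not spell out an argument for Lemma~\ref{lem:3.7} but simply refers to \cite[Lemma~5.10]{SY1} and \cite[Lemma~3.10]{SY10}, so there is no in-text proof to compare against; your argument supplies the details in the natural way. The two ingredients you use, namely that $A_A$ (resp.\ ${}_AA$) is an injective cogenerator of $\mod A$ (resp.\ $\mod A^{\op}$) so that the trace ideal of $M$ has the same right annihilator as $M$ itself, and Nakayama's Galois correspondence between left and right ideals recalled in Section~\ref{sec:orbitAlgebras}, are exactly what the cited lemmas rest on. Two small points worth making explicit if you write this up: the identity $f(m)a=f(ma)$ uses right $A$-linearity of $f$ and not that $Ma$ is a submodule, which is fine, but say so; and the observation $I=l_A(D(M))$, which you invoke for the dual half, is already recorded in the paper immediately before Lemma~\ref{lem:3.6}, so you can simply cite it rather than rederive it.
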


\begin{proof}
See \cite[Lemma~5.10]{SY1} or  \cite[Lemma~3.10]{SY10}.
\end{proof}

\begin{lemma}
\label{lem:3.8}
We have $eIe=eJe$. 
In particular, $(eIe)^{2}=0$.
\end{lemma}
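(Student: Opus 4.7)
The inclusion $eJe \subseteq eIe$ is immediate from $J \subseteq I$ (Lemma~\ref{lem:3.6}). For the reverse inclusion $eIe \subseteq eJe$, the plan is to use Lemma~\ref{lem:3.7} to rewrite
\[
  eJe = J \cap eAe = l_A(I) \cap eAe = \{ a \in eAe : aI = 0 \},
\]
so that the containment $eIe \subseteq eJe$ becomes equivalent to the annihilator condition $(eIe) \cdot I = 0$ in $A$.

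To establish $(eIe)\cdot I = 0$, I use the Peirce decomposition of $I$ with respect to $e$. Because each primitive idempotent $e_k$ with $k > n$ lies in $I$, both $A(1-e) \subseteq I$ and $(1-e)A \subseteq I$, and hence
\[
  I = eIe \oplus eA(1-e) \oplus (1-e)Ae \oplus (1-e)A(1-e).
\]
Left multiplication by $eIe \subseteq eAe$ kills the two summands beginning with $1-e$, since $e(1-e) = 0$. Thus the task reduces to showing that $(eIe)^2 = 0$ and $(eIe)\cdot eA(1-e) = 0$.

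For each of these, I would interpret an element $x \in e_iIe_j$ (with $i, j \le n$) as a morphism $\phi_x \colon e_jA \to e_iA$ of indecomposable projective-injective right $A$-modules whose image lies in $e_iI$, because $x \in I$ forces $xA \subseteq e_iA \cap I = e_iI$. Using Proposition~\ref{prop:3.5} -- in particular the short exact sequence $0 \to B \to M^d \to N \to 0$ with $N \in \add(M)$, which produces a faithful embedding $B \hookrightarrow M^d$ of right $B$-modules, together with the identification of $\Delta$ as the canonical section of the connecting component determined by the tilting module $M$ -- I would argue that any composite $\phi_x \phi_y$ (with $y \in e_jIe_\ell$), respectively $\phi_x \circ \psi$ (with $\psi$ represented by an element of $eA(1-e)$), factors through a sum of copies of $M$. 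This produces a nontrivial element of the trace ideal $J$ inside a morphism space between indecomposable projective modules; combined with the hereditary structure $Q_H = \Delta^{\mathrm{op}}$ of $H = \End_A(M)$, a Lemma~\ref{lem:3.6}-style argument then forces the composite to vanish, as any nonzero map would produce a composition of irreducible maps violating the valued-quiver condition.

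The hard part of the proof will be making this factorization-and-contradiction precise: one must exhibit the composite as a morphism in the trace of $M$ and simultaneously show it cannot be supported on the indecomposables of $\Delta$, using both Lemma~\ref{lem:3.6} and the tilting-theoretic content of Proposition~\ref{prop:3.5}. Once $(eIe) \cdot I = 0$ is verified, we obtain $eIe \subseteq J \cap eAe = eJe$, whence $eIe = eJe$. The ``in particular'' statement $(eIe)^2 = 0$ is then automatic: by $J = l_A(I)$ from Lemma~\ref{lem:3.7},
\[
  (eIe)^2 = (eJe)(eIe) \subseteq J \cdot I = 0.
\]
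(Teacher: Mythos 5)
Your reduction is correct as far as it goes: the easy inclusion $eJe \subseteq eIe$ follows from Lemma~\ref{lem:3.6}, the Peirce decomposition of $I$ (using $1-e \in I$) is valid, and rewriting $eJe = J\cap eAe = l_A(I)\cap eAe$ via Lemma~\ref{lem:3.7} correctly turns the claim into the annihilator statement $(eIe)\cdot I = 0$, which by your decomposition reduces further to $(eIe)^2 = 0$ and $(eIe)\cdot eA(1-e) = 0$. But at this point the proof stops: everything in the paragraph that is supposed to establish these two vanishings is hedged with ``I would interpret,'' ``I would argue,'' and ``the hard part of the proof will be making this precise.'' No mechanism is given for why the composite $\phi_x\phi_y$ should factor through $\add(M)$ --- the image of each $\phi_x$ lies in $e_iI$, not in $e_iJ$, and nothing in Lemma~\ref{lem:3.6} or Proposition~\ref{prop:3.5} supplies such a factorization. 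The ``Lemma~\ref{lem:3.6}-style contradiction'' is likewise asserted without the relevant map actually being exhibited in $\Hom(M,M)$. In short, you have shown that the lemma is equivalent to the pair of vanishings, but you have not proved the vanishings, and the sketch you give does not look repairable along the route you indicate.

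The paper's argument is genuinely different and is the essential content of the lemma. It sets $C = eAe/eJe$ (so $M$ is a sincere $C$-module) and proves that $M$ is \emph{faithful} over $C$, which immediately gives $eIe/eJe = r_C(M) = 0$. The proof of faithfulness is a careful tilting-theoretic argument in five steps: (1)--(2) the modules $\rad P$ lying on $\Delta$ become injective in $\mod C$, separating the cases $P = e_iA$ with $i \le n$ and $i > n$; (3) for the remaining summands $X$ of $M$, the almost split sequence starting at $X$ in $\mod B$ restricts correctly to $\mod eAe$ and $\mod C$, so $\tau_C^{-1}X = \tau_B^{-1}X$; (4) this yields $\id_C M \le 1$ via the connecting homomorphism applied to $0 \to eJe \to eAe \to C \to 0$ together with Lemma~\ref{lem:3.4}; (5) combining $\id_C M \le 1$, $\Ext^1_C(M,M)=0$ (coming from $\Hom_B(\tau_B^{-1}M,M)=0$), and a rank count shows $M$ is a cotilting $C$-module, hence $D(M)$ is a faithful tilting module over $C^{\op}$, so $r_C(M)=0$. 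None of this appears in your proposal; the reduction you set up is the easy half, and the substance of the lemma is what is missing.
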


\begin{proof}
Since $B\xrightarrow{\sim} eAe/eIe$ canonically,
$M$ is a module in $\mod eAe$ with $r_{eAe}(M)=eIe$. 
We note also that $eJe$ is the trace ideal of $M$ in $eAe$, 
generated by the images of all homomorphisms 
from $M$ to $eAe$ in $\mod eAe$. 
It follows from Lemma~\ref{lem:3.6} that $eJe=eJ$ is an ideal 
of $eAe$ with $eJe\subseteq eIe\subseteq \rad eAe$.
Let $C=eAe/eJe$. 
Then $M$ is a sincere module in $\mod C$.
We will prove that $M$ is a faithful module in $\mod C$. 
Observe that then $eIe/eJe=r_{C}(M)=0$, 
and consequently $eIe=eJe$. 
Clearly, then $(eIe)^{2}=(eJe)(eIe)=0$, because $JI=0$.

We prove the claim in several steps.

(1)
Assume that $\rad e_i A$ lies on $\Delta$, for some $i \in \{1,\dots,n\}$.
Clearly, then $e_i = e_i e = e e_i$ and $e_i (\rad A) = e_i (\rad A) e$.
Moreover, we have $e_i (\rad A) = e_i J$,
because $e_i A$ does not lie on $\Delta$.
On the other hand, $e_i B = e_i A / e_i I$
is an indecomposable projective $B$-module.
Since $M$ is a faithful $B$-module, there exists a monomorphism
$e_i B \to M^r$ for some positive integer $r$.
Further, since $\Delta^{\op}$ is the valued quiver
of $\End_B(M)$, we conclude that 
the composed homomorphism
$e_i \rad A \hookrightarrow e_i A \twoheadrightarrow e_i B \to M^r$
is zero.
Hence $e_i I = e_i  (\rad A)$, and consequently $e_i J = e_i I$.
In particular, we conclude that $e_i (\rad A)$
is an injective module in $\mod C$.

(2)
Assume that $\rad e_i A$ lies on $\Delta$, for some $i \in \{n+1,\dots,r\}$.
Then $e_i A e \subseteq e_i (\rad A)$, and hence $e_i A e = e_i (\rad A)$,
because $e_i (\rad A)$ is a right $B$-module.
This shows that the canonical epimorphism
$e_i (\rad A) \to e_i (\rad A) / \soc(e_i A)$
is a minimal left almost split homomorphism in $\mod e A e$,
and hence $e_i (\rad A)$ is an injective $e A e$-module.
Clearly, then $e_i (\rad A)$ is also an injective $C$-module.

(3)
Assume that $X$ is a module on $\Delta$ which is not the radical
of an indecomposable projective module in $\mod A$.
Then it follows from Lemma~\ref{lem:3.1}
that there  is an almost split sequence in $\mod B$
\[
  0 \to X \to Y \to Z \to 0
\]
which is an almost split sequence in $\mod A$.
Recall that $B\xrightarrow{\sim} eAe/eIe$ canonically.
Applying now the properties of the restriction functor
$\res_e = (-) e : \mod A \to \mod e A e$
(see \cite[Theorem~I.6.8]{ASS}),
we conclude that the above sequence is an almost split
sequence in $\mod e A e$.
In particular, we conclude that 
$\tau_{e A e}^{-1} X = \tau_{B}^{-1} X$,
under the identification $B = e A e / e I e$.
Clearly, then we have also  $\tau_{C}^{-1} X = \tau_{B}^{-1} X$.

(4)
We may decompose $M = U \oplus V$ in $\mod B$
with $V$ being the direct sum of all indecomposable modules
on $\Delta$ which are not radicals of indecomposable
projective modules in $\mod A$.
It follows from (1) and (2) that $U$ is an injective $C$-module.
We prove now that $\id_C V \leq 1$.
We may assume that $V \neq 0$.
Observe that, by (3), we have 
$\tau_{e A e}^{-1} V = \tau_{C}^{-1} V = \tau_{B}^{-1} V$.
Consider the exact sequence
\[
0 \to eJe\xrightarrow{u} eAe\xrightarrow{v} C \to 0
\]
in $\mod C$, where $u$ is the inclusion homomorphism
and $v$ is the canonical epimorphism. 
Applying the functor $\Hom_{eAe}(\tau_{eAe}^{-1}V,-)\colon\mod eAe\to\mod K$ 
to this sequence, we get the exact sequence in $\mod K$ of the form
\begin{align*}
\Hom_{eAe}(\tau_{eAe}^{-1}V,eJe)
   &\xrightarrow{\alpha} \Hom_{eAe}(\tau_{eAe}^{-1}V,eAe)
   \\&
     \xrightarrow{\beta} \Hom_{eAe}(\tau_{eAe}^{-1}V,C)
     \xrightarrow{\gamma} \Ext_{eAe}^{1}(\tau_{eAe}^{-1}V,eJe) ,
\end{align*}
where $\alpha=\Hom_{eAe}(\tau_{eAe}^{-1}V,u)$, 
$\beta=\Hom_{eAe}(\tau_{eAe}^{-1}V,v)$, 
and $\gamma$ is the connecting homomorphism.
Since $\Delta$ is a section of the connecting component 
of $\Gamma_B$,
we conclude that there is an epimorphism $M^s \to \tau_B^{-1} V$
in $\mod B$, for some positive integer $s$,
and hence in $\mod e A e$.
Hence $\alpha$ is an isomorphism, because
$\tau_{e A e}^{-1} V = \tau_{B}^{-1} V$.
This implies that $\gamma$ is a monomorphism.
Further, every homomorphism from $M$
to $e J e$ in $\mod B$ factors through
$(\tau_{B}^{-1} V)^t = (\tau_{B}^{-1} M)^t$
for some positive integer $t$,
and hence there is an epimorphism
$(\tau_{B}^{-1} V)^t \to e J$.
Then we get $\Hom_{eAe}(eJe,V)=\Hom_{B}(eJe,V)=0$, because 
$\Hom_B(\tau_B^{-1} M, M) = 0$.
Then we obtain 
$\Ext_{eAe}^{1}(\tau_{eAe}^{-1}V,eJe)\cong D\overline{\Hom}_{eAe}(eJe,V)=0$. 
Summing up, we conclude that
$\Hom_{C}(\tau_{C}^{-1}V,C)=\Hom_{eAe}(\tau_{eAe}^{-1}V,C)=0$, 
and hence $\id_C V \leq 1$.

(5)
By (1), (2) and (4), we have $\id_C M \leq 1$.
Further, 
$\Ext_{C}^{1}(M,M) \cong\linebreak D\overline{\Hom}_{C}(\tau_{C}^{-1}M,M)
 = D\overline{\Hom}_{B}(\tau_{B}^{-1}M,M)=0$.
Since the rank of $K_{0}(C)$ is the rank of $K_{0}(B)$, 
which is the number of indecomposable direct summands of $M$,
we conclude that $M$ is a cotilting module in $\mod C$. 
Then $D(M)$ is a tilting module in $\mod C^{\op}$. 
In particular, $D(M)$ is a faithful module in $\mod C^{\op}$. 
Then we obtain the required fact 
$r_{C}(M)=r_{C^{\op}}(D(M))=0$.
\end{proof}

\begin{lemma}
\label{lem:3.9}
Let $f$ be a primitive idempotent in $I$ such that $fJ\neq fAe$.
Then $L=fAeAf+fJ+fAeAfAe+eAf+eIe$ is an ideal of $F=(e+f)A(e+f)$,
and $N=fAe/fLe$ is a module in $\mod B$ such that 
$\Hom_{B}(N,M)=0$ and $\Hom_{B}(M,N)\neq 0$.
\end{lemma}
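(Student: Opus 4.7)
The lemma comprises four separate assertions: $L$ is an ideal of $F$; $N$ is a right $B$-module; $\Hom_B(N,M) = 0$; and $\Hom_B(M,N) \neq 0$. I would handle them in that order. The first two are routine verifications. Since $f \in I$ is primitive we have $f = e_j$ for some $j > n$ and hence $ef = fe = 0$, so $F = eAe \oplus eAf \oplus fAe \oplus fAf$ as $K$-vector spaces. One checks piece by piece that each summand of $L$ is stable under left and right multiplication by each summand of $F$: most cross-products vanish by orthogonality, while the nontrivial absorptions (e.g.\ $(eAf)(fAe) \subseteq eAfAe \subseteq eIe$ using $fAe \subseteq I$ and $AI \subseteq I$, $(fAe)(eAf) \subseteq fAeAf$, $(fAe)(eAeAfAe) \subseteq fAeAfAe$, and $(eAe)(eIe) \subseteq eIe$) are precisely built into $L$. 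For $N \in \mod B$ we need $fAe \cdot eIe \subseteq fLe$: by Lemma~\ref{lem:3.8} we have $eIe = eJe$, and $fAe \cdot eJe \subseteq fAJe \subseteq fJe \subseteq fLe$ since $J$ is an ideal of $A$.

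\textbf{Vanishing of $\Hom_B(N,M)$.} An element $\varphi \in \Hom_B(N,M)$ lifts to a right $eAe$-linear map $\tilde\varphi \colon fAe \to M$ with $\tilde\varphi(fJe + fAeAfAe) = 0$. I would use the identification $fAe \simeq \Hom_A(eA, fA)$ via $fae \leftrightarrow (eb \mapsto faeb)$, together with the adjunction $\Hom_{eAe}(fAe, M) \simeq \Hom_A(fAe \otimes_{eAe} eA, M)$, to interpret $\tilde\varphi$ as a right $A$-linear map $\Phi \colon fAe \otimes_{eAe} eA \to M$. The vanishing on $fJe$ kills the contributions arising from genuine $A$-linear factorizations through $M$ (whose image lies in the trace $fJ$); the vanishing on $fAeAfAe$ kills the remaining contributions coming from zigzag compositions $fAe \to M^s \to fAe$ followed by multiplication by $A$, which are forced to lie in $fAeAfAe$ because $\End_A(M)$ has valued quiver $\Delta^{\op}$. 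Combined with $\Hom_A(fA, M) = Mf = 0$ (since $f \in I$ annihilates $M$), these two conditions force $\Phi$ to factor through the zero map and hence $\tilde\varphi = 0$.

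\textbf{Nonvanishing, and main obstacle.} Because $Me = M$, every $A$-homomorphism $M \to A$ lands in $Ae$, so $fJ \subseteq fAe$ automatically, and the hypothesis $fJ \neq fAe$ reads $fJe \subsetneq fAe$; in particular $N \neq 0$. To build a nonzero element of $\Hom_B(M,N)$, I would choose a simple quotient $S$ of $N$ which is also a quotient of some indecomposable summand of $M$ (such a simple exists because $M$ is faithful, hence has every simple $B$-module in its top or among the tops of suitable summands on $\Delta$), obtain a nonzero composite $M \twoheadrightarrow S$, and lift it through the epimorphism $N \twoheadrightarrow S$ using $\pd_B M \leq 1$ (Lemma~\ref{lem:3.4}) and the Ext-vanishing provided by the tilting property of $M$ (Proposition~\ref{prop:3.5}). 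The most delicate step of the whole argument is the vanishing $\Hom_B(N, M) = 0$: showing that the two generators $fJe$ and $fAeAfAe$ together exhaust every possible image of an $eAe$-linear homomorphism $fAe \to M$ requires combining the Hom-tensor adjunction with the rigid description of homomorphism spaces among modules of $\add M$ provided by the quiver $\Delta^{\op}$ of $H$, and this is where the hereditary almost right regular hypothesis on $\Delta$ is truly used.
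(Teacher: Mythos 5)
The routine verifications that $L$ is an ideal of $F$ and that $N$ is a right $B$-module match the paper, as does the observation that $N\neq 0$ (the paper uses Nakayama's lemma with $eAfAe\subseteq\rad(eAe)$; your phrasing via $fJe\subsetneq fAe$ amounts to the same point once $fJ\subseteq fAe$ is known).

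The argument for $\Hom_B(N,M)=0$ has a real gap. Your adjunction set-up is fine, but the two claims that ``vanishing on $fJe$ kills genuine $A$-linear factorizations'' and ``vanishing on $fAeAfAe$ kills zigzag compositions'' are not precise statements, and I do not see how to turn them into a proof. The paper's route is quite different and relies on two specific structural ingredients you do not use. First, $F/L$ is identified with the triangular matrix algebra $\Lambda=\begin{bmatrix}S&N\\0&B\end{bmatrix}$ with $S=fAf/fLf$, so $N$ becomes the ``connecting'' $S$-$B$-bimodule and $\Hom_B(N,X)$ can be controlled by Auslander--Reiten theory over $\Lambda$. Second, the argument bifurcates on whether the summand $X$ of $M$ is the radical of an indecomposable projective $A$-module. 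When it is not, Lemma~\ref{lem:3.1} gives an almost split sequence in $\mod B$ that survives to $\mod A$, hence to $\mod F$ and to $\mod\Lambda$, and \cite[Lemma~5.6]{SY1} then forces $\Hom_B(N,X)=0$. When $X=\rad P$, the almost right regular hypothesis together with Proposition~\ref{prop:3.5} (identifying $\Delta$ with the section $\Delta_T$ of the connecting component $\cC_T$ of $\Gamma_B$) shows that $\Hom_B(N,X)\neq 0$ would force $X$ to be a direct summand of $N$, hence $P=fA$ and $fLe=0$, giving $fJ=0$, which is absurd. Your proposal does not distinguish these two cases at all, and the second case is exactly where the ``almost right regular'' hypothesis enters; an argument that does not separate it out is unlikely to be repairable along the lines you sketch.

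Your nonvanishing argument also has a gap. To lift a nonzero map $M\twoheadrightarrow S$ along the epimorphism $N\twoheadrightarrow S$ you would need $\Ext^1_B(M,\ker(N\to S))=0$, but neither $\pd_B M\leq 1$ nor $M$ being a tilting module gives this for an arbitrary kernel; it holds if and only if that kernel is generated by $M$, which you have not established. The paper avoids this entirely: since $M$ is a tilting $B$-module, every indecomposable $B$-module is either generated or cogenerated by $M$, and $\Hom_B(N,M)=0$ rules out cogeneration, so $\Hom_B(M,N)\neq 0$ follows at once. You should replace your lifting argument by this dichotomy, which is both correct and shorter.
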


\begin{proof}
It follows from Lemma~\ref{lem:3.8} that $fAeIe\subseteq fJ$. 
Then the fact that $L$ is an ideal of $F$ is a direct 
consequence of $fJ\subseteq fAe$.
Observe also that $N \neq 0$.
Indeed, if $fAe=fLe$ then
$fAe=fJ+fAe(\rad (eAe))$,
since $eAfAe\subseteq\rad (eAe)$.
But then $fAe=fJ$, by the Nakayama lemma 
\cite[Lemma~I.3.3]{SY9}, which contradicts our assumption. 
Further, $B \cong eAe/eIe$ and $(fAe)(eIe)=fAeJ\subseteq fJ\subseteq fLe$, 
and hence $N$ is a right $B$-module. 
Moreover, $N$ is also a left module over $S=fAf/fLf$ 
and $F/L$ is isomorphic to the triangular matrix algebra
\[
\Lambda=
   \begin{bmatrix}
        S&N\\ 0&B
   \end{bmatrix}
.
\]
Let $X$ be an indecomposable direct summand of $M$.
Assume first that $X$ is not the radical of an indecomposable projective
module in $\mod A$.
Then it follows from Lemma~\ref{lem:3.1} that
we have in $\mod B$ an almost split sequence
\[
   0\longrightarrow X\longrightarrow Y\longrightarrow Z\longrightarrow 0
\]
which is also an almost split sequence in $\mod A$,
and consequently in $\mod F$.
Since $\Lambda$ is a quotient algebra of $F$ and 
$B$ is a quotient algebra of $\Lambda$,
we conclude that it is also an almost split sequence in $\mod \Lambda$.
Applying now \cite[Lemma~5.6]{SY1}
(or \cite[Theorem~VII.10.9]{SY11}) 
we conclude that $\Hom_B(N,X)=0$.
Assume now that $X = \rad P$ for an indecomposable projective
module $P$ in $\mod A$.
Suppose that  $\Hom_B(N,X) \neq 0$.
It follows from the assumption imposed on $\Delta$
that every direct predecessor of $X$ in $\Gamma_A$
lies on $\Delta$ and is not the radical of an indecomposable
projective module in $\mod A$.
Moreover, by Proposition~\ref{prop:3.5},
$\Delta$ is the canonical section $\Delta_T$ 
of the connecting component $\cC_T$ of $\Gamma_B$.
Then $\Hom_B(N,X) \neq 0$ forces that $X$ is an indecomposable
direct summand of $N$.
Recall that $N=fAe/fLe$ with
$L=fAeAf+fJ+fAeAfAe+eAf+eIe$.
Hence we obtain $P = f A$ and $X = \rad P = f A e$.
Then we conclude that $f L e = 0$, and hence $f J = 0$.
But it is not possible because $f(\rad A) = \rad f A$
lies on $\Delta$, and is equal to $f J$.

Summing up, we obtain that $\Hom_B(N,M)=0$.
Since every indecomposable module in $\mod B$
is either generated or cogenerated by $M$,
we conclude that $\Hom_{B}(M,N)\neq0$.
\end{proof}

Applying Lemmas~\ref{lem:3.6}--\ref{lem:3.9}  as in 
\cite[Proposition~5.9]{SY1},
we obtain the following proposition.

\begin{proposition}
\label{prop:3.10}
We have $I e = J$ and $e I = J'$.
\end{proposition}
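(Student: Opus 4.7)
The plan is to prove both equalities in tandem, handling the easy containments directly and the reverse containments by contradiction via Lemma~\ref{lem:3.9}. For the easy direction, by Lemma~\ref{lem:3.7} we have $J = l_A(I)$, and since $1-e = e_{n+1} + \cdots + e_r$ lies in $I$ (each $e_j$ with $j > n$ being in $I$ by the definition of residual identity), every $x \in J$ satisfies $x(1-e) = 0$ and therefore $x = xe \in Ae$. Combined with $J \subseteq I$ from Lemma~\ref{lem:3.6}, this yields $J \subseteq I \cap Ae = Ie$. The symmetric reasoning, using $J' = r_A(I)$ and $1-e \in I$ acting on the left, gives $J' \subseteq eI$.

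For the reverse containment $Ie \subseteq J$, I decompose $Ie = eIe + \sum_{i > n} e_i I e$. Lemma~\ref{lem:3.8} disposes of the component $eIe = eJe \subseteq J$. For a primitive idempotent $f = e_i$ with $i > n$, the relation $f \in I$ forces $fA \subseteq I$ and hence $fIe = fAe$, so it suffices to prove $fJ = fAe$ (the inclusion $fJ \subseteq fAe$ being automatic since $J \subseteq Ae$). I assume for contradiction that $fJ \neq fAe$ for some such $f$. Lemma~\ref{lem:3.9} then produces a nonzero right $B$-module $N = fAe/fLe$ satisfying both $\Hom_B(N, M) = 0$ and $\Hom_B(M, N) \neq 0$, and the task reduces to ruling out the coexistence of these two conditions.

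The crux and main obstacle is extracting this contradiction. By Proposition~\ref{prop:3.5}, $M$ is a tilting $B$-module and $\Delta$ is the canonical section of the connecting component $\cC_T$ of $\Gamma_B$, so every indecomposable $B$-module is generated or cogenerated by $M$. The vanishing $\Hom_B(N, M) = 0$ rules out any summand of $N$ that lies on $\Delta$ or is cogenerated by $M$, forcing every indecomposable summand of $N$ to sit strictly to the right of the slice $\Delta$ in $\cC_T$. Following the pattern of \cite[Proposition~5.9]{SY1}, one then exploits the explicit presentation $N = fAe/fLe$ together with the almost split sequences across $\Delta$ (used already in the proof of Lemma~\ref{lem:3.9}) and the faithfulness of $M$ as a $B$-module to produce a nonzero homomorphism from $N$ into some module in $\add(M)$, contradicting $\Hom_B(N, M) = 0$. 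The equality $eI = J'$ follows by the symmetric argument, applying the duality $D : \mod A^{\op} \to \mod A$ to transport the claim to the left $A$-module $D(M)$.
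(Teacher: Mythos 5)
Your setup is correct and actually supplies more detail than the paper itself, which disposes of the proposition with a single sentence referring the reader to \cite[Proposition~5.9]{SY1}. The easy containment $J \subseteq Ie$ via $J = l_A(I)$, $1-e \in I$, and Lemma~\ref{lem:3.6} is right, as is the decomposition $Ie = eIe + \sum_{i>n} e_iIe$, the use of Lemma~\ref{lem:3.8} to absorb $eIe$, and the reduction of the remaining piece to showing $fJ = fAe$ for each primitive idempotent $f = e_i \in I$.

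The gap is in the crux, and it is substantive. You assume $fJ \neq fAe$, invoke Lemma~\ref{lem:3.9} to produce $N = fAe/fLe$ with $\Hom_B(N,M)=0$ and $\Hom_B(M,N)\neq 0$, and then sketch deriving a contradiction by ``producing a nonzero homomorphism from $N$ into $\add(M)$, contradicting $\Hom_B(N,M)=0$.'' This cannot be the right mechanism, for two reasons. First, Lemma~\ref{lem:3.9} itself carefully establishes $\Hom_B(N,M)=0$ using the almost split sequences along $\Delta$ together with the connecting-component structure of $B$; if the same hypothesis $fJ\neq fAe$, together with faithfulness of $M$ and the explicit presentation of $N$, routinely forced $\Hom_B(N,M)\neq 0$, the lemma would already be vacuous. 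Second, your sketch never uses the second conclusion of Lemma~\ref{lem:3.9}, namely $\Hom_B(M,N)\neq 0$, which the lemma goes to the trouble of recording precisely because it is needed: together with $\Hom_B(N,M)=0$ it situates $N$ strictly in the ``torsion'' region of the connecting component, and the contradiction in \cite[Proposition~5.9]{SY1} is extracted from that positioning in conjunction with the self-injective structure of $A$ (the projective--injective module $fA$, the Nakayama permutation, and the description of $F/L$ as the one-point extension $\Lambda = \left[\begin{smallmatrix} S & N \\ 0 & B \end{smallmatrix}\right]$), not by re-deriving $\Hom_B(N,M)\neq 0$. In short, you have correctly identified that the proposition rests on ruling out the existence of $f$ with $fJ\neq fAe$, but the actual obstruction is left unexplained; merely asserting it follows ``the pattern of SY1'' matches the paper's level of detail but does not constitute a proof, and the particular contradiction you gesture at appears to point in the wrong direction.

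The dual statement $eI = J'$ is handled appropriately by duality, matching the paper's implicit treatment.
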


It follows from 
Lemma~\ref{lem:3.7} and Proposition~\ref{prop:3.10} that
$r_{A}(I)=J'=eI$ and $l_{A}(I)=J$.
Moreover, since $B$ is a tilted algebra,
the valued quiver $Q_B$ of $B$ is acyclic.
Then applying Theorem~\ref{th:2.1}
we conclude that $A$ is socle equivalent to the orbit
algebra $\widehat{B}/(\varphi \nu_{\widehat{B}})$ for some
positive automorphism $\varphi$ of $\widehat{B}$.
Further, if $K$ is an algebraically closed field,
then $A$ is isomorphic to $\widehat{B}/(\varphi \nu_{\widehat{B}})$.

\section{Proof of the sufficiency part of Theorem~\ref{th:main}}

We start with general facts.

Let $\Lambda$ be a self-injective algebra.
Then for any indecomposable projective module $P$
in $\mod \Lambda$ we have a canonical almost split
sequence
\[
  0 \to \rad P \to (\rad P / \soc P) \oplus P 
     \to P / \soc P \to 0 ,
\]
and hence $\rad P$ is a unique direct predecessor of $P$
and $P/\soc P$ is a unique direct successor of $P$ in $\Gamma_{\Lambda}$.
Hence, the Auslander-Reiten quiver $\Gamma_{\Lambda/\soc(\Lambda)}$
is obtained from $\Gamma_{\Lambda}$ by deleting all projective
modules $P$ and the arrows $\rad P \to P$ and $P  \to P / \soc P$.
We also note that if $\Delta$ is a stable slice of $\Gamma_{\Lambda}$
then $\Delta$ is a full valued subquiver of  
$\Gamma_{\Lambda/\soc(\Lambda)}$.
Hence we have the following fact.

\begin{proposition}
\label{prop:4.1}
Let $\Lambda$ and $A$ be two socle equivalent self-injective
algebras and 
$\phi : \mod \Lambda/\soc(\Lambda) \to \mod A/\soc(A)$
the isomorphism of module categories induced by an algebra
isomorphism 
$\varphi : \Lambda/\soc(\Lambda) \to A/\soc(A)$.
Then a full valued subquiver $\Delta$ of $\Gamma_{\Lambda}$
is a hereditary almost right regular slice of $\Gamma_{\Lambda}$
if and only if $\phi(\Lambda)$ is a hereditary almost right regular 
stable slice of $\Gamma_{A}$.
\end{proposition}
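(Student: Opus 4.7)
The plan is to exploit the observation made just before the proposition: the quiver $\Gamma_{\Lambda/\soc(\Lambda)}$ is obtained from $\Gamma_\Lambda$ by deleting projective modules and their adjacent arrows, and any stable slice of $\Gamma_\Lambda$ already sits entirely inside this ``deleted part.'' Consequently, every defining condition of a hereditary almost right regular stable slice should be detectable inside the stable quivers alone and hence transfer along $\phi$ by symmetry between $\Lambda$ and $A$.

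First I would note that the algebra isomorphism $\varphi$ induces an isomorphism $\Gamma_{\Lambda/\soc(\Lambda)} \xrightarrow{\sim} \Gamma_{A/\soc(A)}$ compatible with valuations and AR-translations; in particular $\phi(\Delta)$ is a well-defined full valued subquiver of $\Gamma_A$. The three conditions (1)--(3) defining a stable slice concern only non-projective vertices, arrows between them, and the translation $\tau_\Lambda^{\pm 1}\Delta$. Whenever $X$ and $\tau_\Lambda X$ are both non-projective, the AR-sequence ending in $X$ lives entirely in $\mod \Lambda/\soc(\Lambda)$, so $\tau_\Lambda$ agrees on such $X$ with $\tau_{\Lambda/\soc(\Lambda)}$. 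Therefore conditions (1)--(3) for $\Delta$ in $\Gamma_\Lambda$ are equivalent, via $\phi$, to the same conditions for $\phi(\Delta)$ in $\Gamma_A$.

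It remains to verify the ``almost right regular'' and ``hereditary'' conditions. Under the socle equivalence, primitive idempotents of $\Lambda$ and $A$ correspond after lifting through $\Lambda \twoheadrightarrow \Lambda/\soc(\Lambda)$ and $A \twoheadrightarrow A/\soc(A)$, so indecomposable projective modules are in canonical bijection; and since $\rad \Lambda \cdot \soc \Lambda = 0$ in a self-injective algebra, every $\rad P$ is annihilated by $\soc \Lambda$, so $\phi(\rad P) = \rad Q$ for the matching projective $Q$ in $\mod A$. Since ``being a sink of $\Delta$'' is a purely combinatorial property of $\Delta$, almost right regularity transfers. For hereditariness, every module on $\Delta$ is a $\Lambda/\soc(\Lambda)$-module, so
\[
H(\Delta) = \End_\Lambda(M(\Delta)) = \End_{\Lambda/\soc(\Lambda)}(M(\Delta)) \cong \End_{A/\soc(A)}(\phi(M(\Delta))) = H(\phi(\Delta)),
\]
whence both the hereditary property and the identification of the valued quiver with $\Delta^{\op}$ transfer. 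The main obstacle I anticipate is the bookkeeping needed to verify carefully that the matching between projective $\Lambda$-modules and projective $A$-modules under the socle equivalence makes ``radical of an indecomposable projective'' recognisable inside $\phi$; once that is in place, the ``if'' direction follows by applying the whole argument to $\varphi^{-1}$.
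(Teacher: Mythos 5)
Your overall plan (reduce everything to $\mod \Lambda/\soc(\Lambda)$ and transport along $\phi$) is the argument the paper intends; the paper gives no proof beyond the observation preceding the statement. But your criterion for when $\tau_\Lambda$ agrees with $\tau_{\Lambda/\soc(\Lambda)}$ is not correct. You claim agreement ``whenever $X$ and $\tau_\Lambda X$ are both non-projective,'' yet for a self-injective algebra this condition is automatic for every non-projective $X$, while the two translations genuinely differ at $X = P/\soc P$: there $\tau_\Lambda X = \rad P$, whereas $X$ is projective over $\Lambda/\soc(\Lambda)$ and $\tau_{\Lambda/\soc(\Lambda)} X$ is undefined. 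The obstruction is the almost split sequence $0 \to \rad P \to (\rad P/\soc P)\oplus P \to P/\soc P \to 0$: its end terms are non-projective, but $P$ sits in the middle, so the sequence does not descend to $\mod \Lambda/\soc(\Lambda)$. The right dichotomy is whether $X\cong P/\soc P$ for some indecomposable projective $P$, not whether $X$ and $\tau_\Lambda X$ are non-projective.

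This matters because $\Delta$ may contain modules of the form $P/\soc P$, and conditions (2)--(3) of the stable-slice definition appeal to $\tau_\Lambda^{\pm 1}\Delta$, so the case $U=P/\soc P\in\Delta$ needs a separate argument. It can be repaired: $P/\soc P$ and $\rad P$ are, respectively, indecomposable projective and indecomposable injective objects of $\mod\Lambda/\soc(\Lambda)$, and they are paired under $\tau_\Lambda$ via the common module $\rad(P/\soc P)=(\rad P)/\soc(\rad P)$; since $\phi$ preserves projectives, injectives, and this identification, one still obtains $\phi(\tau_\Lambda U)=\tau_A(\phi(U))$ in this case. With that gap closed, your remaining observations --- that $\End_\Lambda(M(\Delta))=\End_{\Lambda/\soc(\Lambda)}(M(\Delta))$, and that each $\rad P$ is recognised intrinsically as an indecomposable injective over the socle quotient, so ``being a radical of a projective'' is preserved by $\phi$ --- complete the argument.
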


Therefore, for proving the sufficiency part of Theorem~\ref{th:main},
we may assume that $A = \widehat{B}/(\varphi \nu_{\widehat{B}})$
for a tilted algebra $B$ and a positive automorphism $\varphi$
of $\widehat{B}$.
We divide that proof into three cases.

\begin{proposition}
\label{prop:4.2}
Assume $A$ is of infinite representation type.
Then $\Gamma_A$ admits a hereditary right regular
stable slice $\Delta$.
\end{proposition}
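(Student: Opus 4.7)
The plan is to build the required slice $\Delta$ by taking the canonical section of the connecting component of $\Gamma_B$, transporting it to $\Gamma_A$ through the Galois covering $\widehat{B}\to A$, and then translating by a suitable power of $\tau_A^{-1}$ to escape the finitely many radicals of indecomposable projective $A$-modules.

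First I would observe that, since $A$ is of infinite representation type, the tilted algebra $B=\End_H(T)$ must be of non-Dynkin type: otherwise $\widehat{B}$ would have only finitely many indecomposable modules up to $G$-action and $A$ would be representation-finite. Hence $H$ is hereditary of infinite representation type, and the connecting component $\cC_T\subset\Gamma_B$ of the canonical section $\Delta_T$ is infinite with unbounded $\tau_B$-orbits in the negative direction. By tilting theory (as used in the proof of Proposition~\ref{prop:3.5}), $\End_B(M(\tau_B^{-m}\Delta_T))\cong H$ with quiver $(\tau_B^{-m}\Delta_T)^{\op}=Q_H$ for every $m\geq 0$, so each such translate is already hereditary in the required sense.

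Next I would transfer the picture to $\mod A$. The component $\cC_T$ lifts to a component of $\Gamma_{\widehat{B}}$ containing $\Delta_T$, and the push-down functor $F_\lambda\colon\mod\widehat{B}\to\mod A$ associated to the Galois covering $\widehat{B}\to A$ carries each $\tau_{\widehat{B}}^{-m}\Delta_T$ to a full valued subquiver $\Delta^{(m)}$ of $\Gamma_A$. Strict positivity of $\varphi\nu_{\widehat{B}}$ guarantees that the $G$-orbits of the vertices of $\tau_{\widehat{B}}^{-m}\Delta_T$ are pairwise distinct and that $F_\lambda$ induces an isomorphism on the corresponding Hom-algebras, so $\End_A(M(\Delta^{(m)}))\cong H$ with $\Delta^{(m)\op}=Q_H$; this supplies the hereditary property. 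Stable-slice axioms (1)--(3) for $\Delta^{(m)}$ follow from the corresponding section properties of $\tau_{\widehat{B}}^{-m}\Delta_T$ in $\Gamma_{\widehat{B}}$, using that $F_\lambda$ preserves the translation quiver structure and sends almost split sequences of non-projective $\widehat{B}$-modules to almost split sequences in $\mod A$.

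Finally, to secure right regularity, I would exploit that $A$ has only finitely many indecomposable projective modules and hence only finitely many modules of the form $\rad P$. Any such $\rad P$ lying in the same component of $\Gamma_A$ as $\Delta^{(0)}$ occupies a fixed $\tau_A$-orbit position, and because the $\tau_A$-orbits of the vertices of $\Delta^{(0)}$ are infinite---this coming from the infinite $\tau_{\widehat{B}}$-orbits in $\cC_T$ pushed down under a strictly positive $G$-action---choosing $m$ larger than all the relevant orbit-distances yields a $\Delta^{(m)}$ disjoint from every $\rad P$, giving the desired hereditary right regular stable slice. The main obstacle I anticipate is exactly this last step: verifying that the quotient by $G$ does not cause some $\rad P$ to reappear arbitrarily far along a $\tau_A$-orbit of $\Delta^{(0)}$. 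This rests on the strict positivity of $\varphi\nu_{\widehat{B}}$, which forces $G$ to act freely on objects of $\widehat{B}$ so that the quotient of an infinite $\tau_{\widehat{B}}$-orbit on $\cC_T$ remains an infinite $\tau_A$-orbit in $\Gamma_A$, and hence avoids any prescribed finite set of modules after a bounded translation.
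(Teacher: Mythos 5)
Your construction follows the Galois covering strategy that the paper uses in Proposition~\ref{prop:4.4} for the \emph{Dynkin} case, and it does not transfer to the infinite representation type case in two essential ways.

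First, the push-down machinery does not behave as you claim when $B$ is not tilted of Dynkin type. In that situation $\widehat{B}$ is no longer locally representation-finite (and, in wild type, not even locally support-finite), so the push-down functor $F_\lambda\colon\mod\widehat{B}\to\mod A$ is in general not dense, need not send almost split sequences to almost split sequences, $\Gamma_A$ is not the orbit translation quiver $\Gamma_{\widehat{B}}/G$, and the covering isomorphism $\bigoplus_{r}\Hom_{\widehat{B}}(X,g^rY)\to\Hom_A(F_\lambda X,F_\lambda Y)$ can fail for indecomposables. Thus the step where you transport the section across $F_\lambda$, and conclude that $\End_A(M(\Delta^{(m)}))\cong H$ and that the stable-slice axioms descend, is unjustified precisely in the case this proposition addresses. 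Second, upstream of that, the translates $\tau_B^{-m}\Delta_T$ need not exist for all $m\geq 0$: the vertices of the canonical section $\Delta_T$ are $\Hom_H(T,I)$ with $I$ injective and can themselves be injective $B$-modules (already for $T=H$ the section is $D(H)$ and is annihilated by $\tau_B^{-1}$), so the claim that $\cC_T$ has unbounded $\tau_B$-orbits in the negative direction is simply false in general.

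The paper avoids both issues by invoking the structure theory of self-injective algebras of non-Dynkin tilted type (\cite{ANS,EKS,SY3,SY4}): $\Gamma_A$ has an acyclic component $\cC$ containing a right stable, generalized standard full translation subquiver $\cD$ closed under successors. Right stability plus closure under successors kills projectives in $\cD$, and hence radicals of projectives as well (an arrow $\rad P\to P$ would force $P\in\cD$), so any finite connected full subquiver $\Delta\subset\cD$ meeting each $\tau_A$-orbit of $\cD$ exactly once is automatically a right regular stable slice. Generalized standardness ($\rad_A^\infty$ vanishes on $\cD$) is what delivers that $\End_A(M(\Delta))$ is hereditary with valued quiver $\Delta^{\op}$, replacing the Hom-transfer you tried to obtain through the Galois covering. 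Your approach would need these component-theoretic inputs anyway, so the push-down detour adds a genuine gap without simplifying the argument.
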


\begin{proof}
By \cite{Ho,HW} and the assumption, we conclude
that $B$ is not a tilted algebra of Dynkin type.
It follows from general theory (see \cite{ANS,EKS,SY3,SY4})
that  $\Gamma_A$ admits an acyclic component $\cC$
containing a right stable full translation subquiver $\cD$
which is closed under successor in $\cC$ and generalized
standard in the sense of \cite{S2}
(the restriction of the infinite radical $\rad_A^{\infty}$
of $\mod A$ to $\cD$ is zero).
We note that $\cD$ does not contain projective module,
because $\cD$ is right stable.
Then we may choose in $\cD$ a full valued connected
subquiver $\Delta$ which intersects every $\tau_A$-orbit
in $\cD$ exactly once.
Clearly, $\Delta$ is a right regular finite stable slice of $\Gamma_A$.
Moreover, since $\cD$ is generalized standard,
we obtain that $\Delta$ is a hereditary slice of $\Gamma_{A}$.
\end{proof}

\begin{proposition}
\label{prop:4.3}
Let $A$ be a Nakayama algebra.
Then $\Gamma_A$ admits a hereditary almost right regular
slice $\Delta$.
\end{proposition}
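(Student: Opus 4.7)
The plan is to exploit the explicit structure of self-injective Nakayama algebras arising from the orbit construction. Any basic, indecomposable such $A$ is isomorphic to $KQ_n/R_n^L$, where $Q_n$ is the cyclic quiver on $n \geq 1$ vertices with arrows $i \to i+1$ and $R_n^L$ is the $L$-th power of the arrow ideal. Under the sufficiency-proof assumption $A = \widehat{B}/(\varphi\nu_{\widehat{B}})$ with $B$ tilted, the Nakayama hypothesis on $A$ forces $B$ to be the path algebra of a linearly oriented Dynkin diagram of type $A_m$, so that $L = m+1$ and $n = rm$ for some $r \geq 1$; in particular $L - 1 \leq n$. Labelling the indecomposables $M_{i,l}$ by top $S_i$ and length $l$ for $i \in \bZ/n\bZ$ and $1 \leq l \leq L$, we have $P_i = M_{i,L}$, $\rad P_i = M_{i+1, L-1}$, the almost split sequences read
\[
0 \to M_{i+1, l} \to M_{i, l+1} \oplus M_{i+1, l-1} \to M_{i, l} \to 0 ,
\]
and $\Gamma_A^s$ is the stable tube of rank $n$ and quasi-length $L-1$.

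Fix any $i_0 \in \bZ/n\bZ$. My candidate slice is the \emph{ray}
\[
\Delta := \{\, M_{i_0 - l + 1,\, l} \mid l = 1, \ldots, L-1 \,\},
\]
equipped with the chain $M_{i_0,1} \to M_{i_0-1,2} \to \cdots \to M_{i_0-L+2, L-1}$ of irreducible maps read from the above sequences. The bound $L - 1 \leq n$ guarantees that the column indices $i_0, i_0 - 1, \ldots, i_0 - L + 2$ are pairwise distinct modulo $n$, so $\Delta$ is a full linearly oriented $A_{L-1}$ subquiver meeting each $\tau_A$-orbit of $\Gamma_A^s$ exactly once. I would then verify the stable-slice axioms by inspection of the almost split sequences: for $U = M_{i_0 - l + 1, l} \in \Delta$, its non-projective predecessors in $\Gamma_A$ are $M_{i_0 - l + 2, l - 1} \in \Delta$ and $M_{i_0 - l + 1, l + 1} \in \tau_A \Delta$, and the successors are handled dually.

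For heredity I would use that the same bound $L - 1 \leq n$ makes $\Hom_A(M_{i_0 - l + 1, l}, M_{i_0 - l' + 1, l'})$ one-dimensional when $l \leq l'$, spanned by the unique inclusion as a length-$l$ submodule, and zero otherwise. The irreducible morphisms in $\add M(\Delta)$ are then the successive inclusions $M_{i_0 - l + 1, l} \hookrightarrow M_{i_0 - l, l+1}$; compositions of inclusions are inclusions, so no relations arise, and $H(\Delta)$ is the hereditary path algebra of linearly oriented $A_{L-1}$, with valued quiver $\Delta^{\op}$. Finally, only the sink $M_{i_0 - L + 2, L - 1} = \rad P_{i_0 - L + 1}$ of $\Delta$ has length $L - 1$, so it is the unique radical of an indecomposable projective in $\Delta$, and by the orientation of $\Delta$ it is a sink; this yields almost right regularity. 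The only non-routine ingredient is the dimensional bound $L - 1 \leq n$, supplied by the orbit-algebra assumption; beyond that, the argument is a direct computation with uniserial modules.
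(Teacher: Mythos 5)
Your construction of $\Delta$ is exactly the paper's: the ray in the stable tube from a simple module (the socle of an indecomposable projective $P$) up to $\rad P$, and in both arguments the hinge is the bound $(L-1) \le \#\{\text{simples of }A\}$, where $L$ is the Loewy length of $A$, which makes the vertices of the ray pairwise non-isomorphic and $\rad P$ multiplicity-free, hence the slice hereditary. Your justification of this bound is flawed, though: from $A = \widehat{B}/(\varphi\nu_{\widehat{B}})$ with $B = KA_m$ linearly oriented (so $L=m+1$), a positive $\varphi$ does \emph{not} force the number of simples to be a multiple $rm$ of $m$; any shift $\varphi$ of $\widehat{B}$ by $s\ge 0$ steps is positive and gives $m+s$ simples. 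What is true, and is all that is needed, is $\#\{\text{simples of }A\}\ge m = L-1$. The paper derives this more robustly and without pinning $B$ down: positivity of $\varphi$ gives $\rk K_0(A)\ge \rk K_0(\widehat{B}/(\nu_{\widehat{B}}))=\rk K_0(B)$, and by Hoshino/Hughes--Waschb\"usch $\rk K_0(B)$ equals the number of $\tau_A$-orbits in $\Gamma_A^s$, which for a self-injective Nakayama algebra is $L-1$; this bypasses your (true but unjustified) claim that $B$ must be hereditary of linear type $A_m$. Beyond this, your explicit $M_{i,l}$-computations of the almost split sequences, the slice axioms, and the Hom-spaces are correct and simply spell out what the paper delegates to \cite[Theorems I.10.5 and III.8.7]{SY9}.
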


\begin{proof}
Let $P$ be an indecomposable projective module in $\mod A$.
Then using the structure of almost split sequence over
Nakayama algebras (see \cite[Theorems I.10.5 and III.8.7]{SY9})
we conclude that there is a sectional path $\Delta$ of the form
\[
  \soc P = X_1 \to X_2 \to \dots \to X_{n-1} \to X_n = \rad P 
\]
such that the $\tau_A$-orbits of these modules exhaust all
indecomposable non-projective modules in $\mod A$.
Moreover, the $\tau_A$-orbit of $\rad P$ consists of the radicals
of all indecomposable projective modules in $\mod A$.
Hence $\Delta$ is an almost right regular stable
slice of $\Gamma_A$.
Since $A = \widehat{B}/(\varphi \nu_{\widehat{B}})$
with $\varphi$ being a positive automorphism of $\widehat{B}$,
we conclude that 
$\rk K_0 (A) \geq \rk K_0 (\T(B)) = \rk K_0 (B)$,
where $\T(B) = B \ltimes D(B) = \widehat{B}/(\nu_{\widehat{B}})$.
On the other hand, it follows from \cite{Ho,HW}
that $\rk K_0 (B)$ is the number of $\tau_A$-orbits in $\Gamma_A^s$.
Therefore, the number of pairwise non-isomorphic indecomposable
projective modules in $\mod A$ is at least $n$.
This implies that $\rad P$ has multiplicity-free composition factors.
But then the endomorphism algebra of the direct sum of modules
on $\Delta$ is a hereditary algebra and $\Delta^{\op}$
is its valued quiver.
Summing up, we conclude that $\Delta$ is a hereditary slice 
of $\Gamma_A$.
\end{proof}

\begin{proposition}
\label{prop:4.4}
Assume that $A$ is of finite representation type
but not a Nakayama algebra.
Then $\Gamma_A$ admits a hereditary right regular
slice $\Delta$.
\end{proposition}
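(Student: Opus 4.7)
Since $A$ has finite representation type, by \cite{Ho, HW} the tilted algebra $B$ must be of Dynkin type, say $\Sigma$. My plan is to work inside the stable Auslander-Reiten quiver $\Gamma_A^s$, which is a quotient $\bZ\Sigma/\Pi$ of the translation quiver $\bZ\Sigma$ by an admissible infinite cyclic group $\Pi$, and to locate a section of $\bZ\Sigma$ whose image in $\Gamma_A$ avoids all radicals of indecomposable projective $A$-modules. Such a section will automatically yield a hereditary stable slice because the connecting component of $\Gamma_{\widehat{B}}$ is generalized standard in the sense of \cite{S2}, so the endomorphism algebra of the direct sum of modules on any such section is hereditary with valued quiver equal to the opposite of the section.

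My first step will be to produce a distinguished candidate from the canonical section $\Delta_T$ of the connecting component $\cC_T$ of $\Gamma_B$ determined by the tilting module $T$ furnished by Proposition~\ref{prop:3.5}. Lifting $\Delta_T$ to $\Gamma_{\widehat{B}}$ gives a section of $\bZ\Sigma$, and applying powers of $\tau$ produces an infinite family of sections that project to finite stable slices of $\Gamma_A$. Each such projection is automatically a hereditary stable slice, so the only remaining issue is to choose the shift so as to obtain right regularity.

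Let $\mathcal{R}$ denote the finite set of vertices of $\Gamma_A^s$ at which the radicals $\rad P$ of the indecomposable projective $A$-modules appear. I will pull $\mathcal{R}$ back to a $\Pi$-invariant subset of $\bZ\Sigma$ and show that, under the non-Nakayama assumption, some section of $\bZ\Sigma$ avoids this preimage entirely. In the Nakayama situation treated in Proposition~\ref{prop:4.3}, $\Sigma$ is of type $\bA_n$ and the radicals $\rad P$ already line up along a single sectional path of $\Gamma_A^s$ that happens to be a section, so every candidate slice is forced to pick up an element of $\mathcal{R}$; the non-Nakayama hypothesis is precisely what is supposed to rule out this degenerate configuration.

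The main obstacle is to convert the qualitative assumption that $A$ is not Nakayama into the combinatorial statement that $\mathcal{R}$ does not coincide with any section of $\bZ\Sigma$ modulo $\Pi$. I expect to argue by contradiction: if the $\Pi$-orbits of $\mathcal{R}$ did form a section, then the sectional-path structure of this section together with the self-injectivity of $A$ would force each $\rad P$ to be uniserial and each indecomposable $A$-module to lie on a single sectional path linking $\soc P$ to $\rad P$, so that $A$ would be Nakayama. Once this is ruled out, a suitable translate $\tau_A^k \Delta_T$ of the canonical section misses every element of $\mathcal{R}$, and its image in $\Gamma_A$ is the desired hereditary right regular stable slice $\Delta$.
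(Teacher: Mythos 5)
Your proposal diverges from the paper's proof in both of the two essential steps, and in each case there is a genuine gap that is not addressed.

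First, on hereditariness: you assert that a section of $\Gamma_{\widehat{B}}\cong\bZ\Sigma$ that projects injectively to $\Gamma_A$ ``automatically yields a hereditary stable slice'' because $\Gamma_{\widehat{B}}$ is generalized standard. This establishes only that the endomorphism algebra $\End_{\widehat{B}}(N)$ of the direct sum $N$ of the $\widehat{B}$-modules on the lifted section $\Omega$ is hereditary; what is needed is that $\End_A(M)$ is hereditary for the pushed-down module $M=F_\lambda(N)$. Since $F_\lambda$ is a covering functor, $\End_A(M)\cong\bigoplus_{r\in\bZ}\Hom_{\widehat{B}}(N,g^rN)$, and one must argue that the summands with $r\neq0$ vanish. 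The paper does this by realizing $\Omega$ as the complete slice of an annihilator algebra $C=\widehat{B}/\ann_{\widehat{B}}(\Omega)$, invoking the Liu--Skowro\'nski criterion to conclude $C$ is tilted with $\widehat{C}=\widehat{B}$, and then using the structure theory of such $C$ inside $\widehat{B}$ together with strict positivity of $g=\varphi\nu_{\widehat{B}}$ to show $C$ and $g^r(C)$ are disjoint on objects. This is a real step, not a formality, and omitting it leaves the endomorphism-algebra claim unproved.

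Second, on right regularity: your plan to avoid the set $\mathcal{R}$ of radicals by showing that ``$\mathcal{R}$ does not coincide with any section modulo $\Pi$'' does not yield the desired conclusion. What is required is a section disjoint from $\mathcal{R}$; the mere failure of $\mathcal{R}$ to \emph{be} a section does not imply that some section \emph{misses} $\mathcal{R}$ entirely (indeed $|\mathcal{R}|=\rk K_0(A)$ is typically strictly larger than the number $n$ of $\tau_A$-orbits, so $\mathcal{R}$ generically is not a section, yet this alone says nothing about avoidance). The paper does not leave this to an abstract counting argument. It follows \cite{JPS}: since $A$ is not Nakayama, there is an indecomposable projective $P$ such that $P/\soc(P)$ is not a radical, and the concrete full subquiver $\Delta_P$ built from $\tau_A^{-1}(P/\soc(P))$ via sectional paths is shown (in \cite{JPS}) to contain no $Q/\soc(Q)$; taking $\Delta=\tau_A(\Delta_P)$ then avoids all $\rad Q$. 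Your contradiction argument (``if $\mathcal{R}$ were a section then $A$ would be Nakayama'') is neither carried out nor, as stated, the correct dichotomy, so it does not substitute for the constructive step. Finally, the appeal to Proposition~\ref{prop:3.5} to produce a tilting module $T$ is circular in this direction of the theorem: that proposition starts from a hereditary slice, which is what is being constructed here; the tilting module one may legitimately start from is the one exhibiting $B$ as a tilted algebra.
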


\begin{proof}
We choose a right regular stable slice $\Delta$ of $\Gamma_A$
following the proof of \cite[Theorem~3.1]{JPS}.
Namely, since $A$ is not a Nakayama algebra,
there exists an indecomposable projective module $P$
such that $P/\soc(P)$ is not the radical of a projective module.
Consider the full valued subquiver $\Delta_P$ of $\Gamma_A$
given by $\tau_A^{-1} (P/\soc (P))$ and all indecomposable
modules $X$ such that there is a non-trivial sectional 
path in $\Gamma_A^s$ from $P/\soc(P)$ to $X$.
It is shown in \cite{JPS} that $\Delta_P$ does not 
contain $Q/\soc(Q)$  for any indecomposable projective
module $Q$ in $\mod A$.
Clearly, $\Delta_P$ is a stable slice of $\Gamma_A$.
Then $\Delta = \tau_A (\Delta_P)$ is a right
regular stable slice of $\Gamma_A$.
We claim that $\Delta$ is a hereditary slice.
Let $g = \varphi \nu_{\widehat{B}}$ and $G$ be 
the infinite cyclic group generated by $g$.
Consider the Galois covering
$F : \widehat{B} \to \widehat{B}/G = A$ and the push-down
functor $F_{\lambda} : \mod \widehat{B} \to \mod A$
associated to it.
Since $B$ is tilted of Dynkin type,
$\widehat{B}$ is a locally representation-finite
locally bounded $K$-category (by \cite{Ho,HW})
and hence $F_{\lambda}$ is dense,
preserves almost split sequences, and $\Gamma_A$
is the orbit translation quiver $\Gamma_{\widehat{B}}/G$
of $\Gamma_{\widehat{B}}$ with respect to the induced
action of $G$ on $\Gamma_{\widehat{B}}$ (see \cite[Theorem~3.6]{G}).
Moreover, $F_{\lambda}$ is a Galois covering
of module categories, that is, 
for any indecomposable modules $X$ and $Y$ in $\mod \widehat{B}$,
$F_{\lambda}$ induces an isomorphism of $K$-vector spaces       
\[
  \bigoplus_{r \in \mathbb{Z}} \Hom_{\widehat{B}}(X,g^r Y)
    \to
  \Hom_A\big(F_{\lambda}(X),F_{\lambda}(Y)\big)  .
\]
In particular, we conclude that there exists a right
regular stable slice $\Omega$ of $\Gamma_{\widehat{B}}$
such that $F_{\lambda}(\Omega) = \Delta$.
Let $N$ be the direct sum of all indecomposable 
$\widehat{B}$-modules lying on $\Omega$.
Then $M = F_{\lambda}(N)$ is the direct sum of all 
indecomposable $A$-modules lying on $\Delta$.
Consider the annihilator algebra 
$C = \widehat{B}/\ann_{\widehat{B}}(\Omega)$
of $\Omega$ in $\widehat{B}$.
Since $N$ is a finite-dimensional $\widehat{B}$-module, 
the support of $N$ is finite, and hence 
$C$ is a finite-dimensional $K$-algebra.
Further, because $\Gamma_{\widehat{B}}$ is an acyclic quiver
and $\Omega$ is a right regular stable slice of $\Gamma_{\widehat{B}}$,
we conclude that $\Omega$ is a faithful section of $\Gamma_{C}$
such that $\Hom_C(N, \tau_C N) = 0$.
Then it follows from the
criterion of Liu and Skowro\'nski
\cite[Theorem~VIII.7.7]{SY11} that $N$
is a tilting $C$-module, $H = \End_C(N)$ is a hereditary algebra,
$T = D(N)$ is a tilting module in $\mod H$,
$C \cong \End_H(T)$ canonically,
and $\Omega$ is the section of $\Gamma_C$
determined by $T$.
In particular, $C$ is a tilted algebra of Dynkin type
$\Omega^{\op} = \Delta^{\op}$.
We note also that ${\widehat{B}} = {\widehat{C}}$,
and hence $\nu_{\widehat{B}} = \nu_{\widehat{C}}$ (see \cite{HW}).
Since $g = \varphi \nu_{\widehat{B}}$ with $\varphi$
a positive automorphism of ${\widehat{B}}$,
we conclude that, for any integer $r$, 
the categories $C$ and $g^r(C)$ have no common objects,
and consequently $\Hom_{\widehat{B}}(N,g^r N) = 0$.
Then we obtain isomorphisms of $K$-vector spaces
\[
  H = \End_C(N) 
  = \End_{\widehat{B}}(N)   
  = \bigoplus_{r \in \mathbb{Z}} \Hom_{\widehat{B}}(N,g^r N)  
  \cong \End_A\big(F_{\lambda}(N)\big)   
  = \End_A(M)   
.
\]
Hence $\End_A(M)$ is a hereditary algebra and $\Delta^{\op}$
is its valued quiver. 
Therefore, $\Delta$ is a hereditary stable slice of $\Gamma_A$.
\end{proof}

We end this section with an example illustrating the above
considerations.

\begin{example}
\label{ex:4.5}
Let $Q$ be the quiver
\[
    \xymatrix{
      1 \ar@<+.5ex>[r]^{\alpha} & \ar@<+.5ex>[l]^{\beta}  
      3 \ar@<+.5ex>[r]^{\gamma} & \ar@<+.5ex>[l]^{\sigma}  
      2
    } ,
\]
$R$ the ideal in the path algebra $K Q$ of $Q$ over $K$
generated by 
$\beta\alpha-\gamma\sigma$,
$\alpha \beta$ and $\sigma \gamma$,
and $A = K Q/R$.
Moreover, let $Q^*$ be the quiver
\[
    \xymatrix{
      1 \ar[r]^{\alpha}  & 3 & \ar[l]_{\sigma}  2
    } 
\]
and $B = K Q^*$ the associated path algebra.
Then $A$ is the self-injective algebra of the form
$\widehat{B}/(\varphi \nu_{\widehat{B}})$
where $\varphi$ is the positive automorphism of 
$\widehat{B}$ given by
\begin{align*}
 &&
 \varphi(e_{m,1}) &= e_{m,2}, &
 \varphi(e_{m,2}) &= e_{m,1}, &
 \varphi(e_{m,3}) &= e_{m,3}, &
 \mbox{for all } m  \in \mathbb{Z} .
 &&
\end{align*} 
For each $i \in \{1,2,3\}$, we denote by $P_i$ and $S_i$
the indecomposable projective module and simple
module in $\mod A$ associated to the vertex $i$.
Then the Auslander-Reiten quiver $\Gamma_A$ of $A$
is of the form
\[
\begin{xy}
0;/r.28pc/:
(20,40)*+{P_1}="0,2" ;
(10,30)*+{\rad P_1}="1,1" ;
(30,30)*+{P_1/S_2}="1,3" ;
(50,30)*+{S_2}="1,5" ;
(70,30)*+{\rad P_2}="1,7" ;
(20,20)*+{S_3}="2,2" ;
(40,20)*+{\rad P_3}="2,4" ;
(50,20)*+{P_3}="2,5" ;
(60,20)*+{P_3/S_3}="2,6" ;
(10,10)*+{\rad P_2}="3,1" ;
(30,10)*+{P_2 / S_1}="3,3" ;
(50,10)*+{S_1}="3,5" ;
(70,10)*+{\rad P_1}="3,7" ;
(20,0)*+{P_2}="4,2" ;
\ar @{->} "1,1";"0,2"
\ar @{->} "0,2";"1,3"
\ar @{->} "1,1";"2,2"
\ar @{->} "1,3";"2,4"
\ar @{->} "1,5";"2,6"
\ar @{->} "2,2";"1,3"
\ar @{->} "2,4";"1,5"
\ar @{->} "2,6";"1,7"
\ar @{->} "2,4";"2,5"
\ar @{->} "2,5";"2,6"
\ar @{->} "2,2";"3,3"
\ar @{->} "2,4";"3,5"
\ar @{->} "2,6";"3,7"
\ar @{->} "3,1";"2,2"
\ar @{->} "3,3";"2,4"
\ar @{->} "3,5";"2,6"
\ar @{->} "3,1";"4,2"
\ar @{->} "4,2";"3,3"
\ar @{--} "1,1";"3,1"
\ar @{--} "1,7";"3,7"
\end{xy}
\]
Then we have the hereditary right stable slices associated to
$P_1,P_2,P_3$ (as in the proof of Proposition~\ref{prop:4.4})
\begin{align*}
 &&
 \tau_A(\Delta_{P_1}) = \tau_A(\Delta_{P_2}) : 
 &&&
 P_1 / S_2 \leftarrow S_3 \to P_2/S_1
 &&
 \\
 &&
 \tau_A(\Delta_{P_3}) : 
 &&&
 S_2 \to P_3/S_3 \leftarrow S_1 .
 &&
\end{align*} 
On the other hand, we have the stable slices of $\Gamma_A$
\begin{align*}
 &&
 \rad P_1 \to S_3 \to P_2/S_1
 &&
 \mbox{and}
 &&
 \rad P_2 \to S_3 \to P_1/S_2
 &&
\end{align*} 
which are not hereditary.
\end{example}

\section*{Acknowledgements}
\label{ackref}
The research described in this paper was completed during the visit
of the second named author at Nicolaus Copernicus University 
in Toru\'n (October 2017).
The authors were supported by the research grant 
DEC-2011/02/A/ST1/00216 of the National Science Center Poland.
The second named author was also supported by
JSPS KAKENHI Grant Number 25400036.

\end{document}